\numberwithin{equation}{section}
\author{Zhenghui Huo}
\author{Brett D. Wick}
\title{Compactness of operators on the Bergman space of the Thullen domain}
\begin{document}

\thanks{BDW's research is partially supported by National Science Foundation grants DMS \# 1560955 and DMS \# 11800057.}
	\address{Zhenghui Huo, Department of Mathematics and Statistics, The University of Toledo,  Toledo, OH 43606-3390, USA}
	\email{zhenghui.huo@utoledo.edu}
	\address{Brett D. Wick, Department of Mathematics and Statistics, Washington University in St. Louis,  St. Louis, MO 63130-4899, USA}
	\email{wick@math.wustl.edu}
		\newtheorem{thm}{Theorem}[section]
	\newtheorem{cl}[thm]{Claim}
	\newtheorem{lem}[thm]{Lemma}
		\newtheorem*{Rmk*}{Remark}
	\newtheorem{ex}[thm]{Example}
		\newtheorem{prop}[thm]{Proposition}
	\newtheorem{de}[thm]{Definition}
	\newtheorem{co}[thm]{Corollary}
	\newtheorem*{thm*}{Theorem}
		\newtheorem{qu}[thm]{Question}
	\maketitle
\begin{abstract}
We study compact operators on the Bergman space of the  Thullen domain defined by
$\{(z_1,z_2)\in \mathbb C^2: |z_1|^{2p}+|z_2|^2<1\}$ with $p>0$ and $p\neq 1$. The domain need not be smooth nor have a transitive automorphism group. We give a sufficient condition for the boundedness of various operators on the Bergman space. Under this boundedness condition, we characterize the compactness of operators on the Bergman space of the Thullen domain.

\medskip
\noindent
{\bf AMS Classification Numbers}:  32A07,  32A25, 32A36, 32A50, 47B35

\medskip

\noindent
{\bf Key Words}: Bergman Space, Toeplitz Operator, Compact Operators, Thullen Domain
\end{abstract}

\section{Introduction}

  Let $\Omega$ be a domain in complex Euclidean space $\mathbb C^n$  and let $d\sigma$ be the Lebesgue measure. 
We use the symbol $\langle \cdot, \cdot\rangle$ and $\|\cdot\|$ to denote the inner product and the norm on $L^2(\Omega)$:
\begin{align}
&\langle f,g\rangle=\int_{\Omega}f(\zeta)\overline{g(\zeta)}d\sigma(\zeta),\\
&\|f\|=\sqrt{\langle f,f\rangle}.
\end{align}
The Bergman projection $P$ is the orthogonal projection from $L^2(\Omega)$ onto $A^2(\Omega)$, the closed subspace of square-integrable holomorphic functions on $\Omega$.
The kernel function associated to the projection $P$
 is called the Bergman kernel and is denoted by $K_{\Omega}$. For fixed $z\in \Omega$, we use $K_z$ to denote the function $K_\Omega(\cdot;\bar z)$ in $A^2(\Omega)$ and use $k_z$ to denote the normalized kernel function $K_z/\|K_z\|$. 
For a function $a\in L^{\infty}(\Omega)$, let $M_a:L^2(\Omega)\rightarrow L^2(\Omega)$ be the multiplication operator by $a$. Then the operator $T_a:=PM_a$ is called the Toeplitz operator with symbol $a$. Let $\mathcal L(A^2(\Omega))$ denote the space of bounded linear operators on $A^2(\Omega)$. The Toeplitz algebra $\mathcal T_{L^{\infty}}$ is the closed subalgebra of $\mathcal L(A^2(\Omega))$ generated by Toeplitz operators with $L^\infty$ symbols
 $$\mathcal T_{L^\infty}=\text{Closure}\left\{\sum_{k=1}^{K}\prod_{j=1}^{J}T_{u_{j,k}}:u_{j,k}\in L^\infty(\Omega) \text{ and } J,K\in \mathbb N \right\},$$
 where the closure is in the operator norm on $A^2(\Omega)$.

In a variety of classical function spaces, the  compactness of a given operator can be determined by examining only its behavior on $k_z$. A well-known result of Su\'arez \cite{Sua} showed that when $\Omega$ is the unit ball $\mathbb B^n$ in $\mathbb C^n$, an operator $T$ in $ \mathcal L(A^2(\mathbb B^n))$ is compact if and only if
 	$T$ is in $\mathcal T_{L^{\infty}}$
 	and $\lim_{\|z\|\rightarrow1}\|Tk_z\|=0$.  Su\'arez's results were later extended to various different function spaces and settings. Namely, the same results were shown to be true for the Bargmann-Fock space \cite{BAUER20121323}, Bergman spaces on the disc and unit ball with classical weights \cite{Mitkovski2013}, and the weighted Bergman spaces on the polydisc \cite{Mitkovski}. By introducing the notion of the Bergman-type space, a unified approach to many of these results was given in \cite{Mitkovski20142028}. Among these results, one of the key properties been used was that the domain $\Omega$ has a transitive automorphism group.  It is worth noting that using the $\bar{\partial}$-Neumann operator technique, versions of the Theorem 5.1 for $T$ in some subalgebra of $\tilde T_{L^\infty}$ have also been proved on more general domains in $\mathbb C^n$. See for example \cite{C̆uc̆ković2013,C̆uc̆ković2017}. 
 	
The Thullen domain we consider in this paper is defined by,
\begin{align}\label{1.3}
U^{\alpha}=\{z=(z_1,z_2)\in\mathbb C^2:|z_1|^{\frac{2}{\alpha}}+|z_2|^2<1\}, \text{ where }{\alpha>0, \alpha\neq 1}.
\end{align}
In 1931, Thullen \cite{Thullen} showed that the holomorphic automorphism $\varphi$ on $U^\alpha$ is of the form:
  \begin{align*}
  \varphi(z_1,z_2)=\left(e^{i\theta_1}z_1\left(\frac{\sqrt{1-|w|^2}}{1-z_2\bar w}\right)^{\alpha},e^{i\theta_2}\frac{w-z_2}{1-z_2\bar w} \right),
 \end{align*}
 where $|w|<1$ and $\theta_1,\theta_2\in \mathbb R$. The same result was also obtained by Cartan \cite{Cartan} using the Lie group approach. It's easy to see from the formula that the holomorphic automorphism group on $U^{\alpha}$ is not transitive. Therefore the Bergman space $A^2(U^{\alpha})$ does not fall into the category of Bergman-type spaces in \cite{Mitkovski20142028}.   However, a suitable modification of the technique in \cite{Mitkovski20142028} will work to study compactness of operators on $A^2(U^{\alpha})$.

The results in this paper are twofold. 
We give a sufficient condition for the boundedness of an operator whose adjoint and itself are defined a priori only on the linear span of the normalized reproducing kernels $\{k_z\}$. See Theorem 3.1. As a consequence, sufficient conditions for the boundedness of Toeplitz operators and Hankel operators can be obtained. See Corollaries 3.2 and 3.3.
 Then under the same sufficient condition we characterize the compact operators.  One of the main results (Theorem 5.3) in this paper shows that if the sufficient condition for the boundedness result is satisfied, then
 \begin{align*} & T \text{ is compact } \Longleftrightarrow\|Tk_z\| \text{ tends to } 0 \text{ as the point } z \text{ approaches the boundary of } U^{\alpha}. 
 \end{align*}
As a consequence, we also have 
 \begin{align*}  \text{ If } T \text{ is a Toeplitz operator with}& \text{ a } L^\infty \text{ symbol, then } \\T \text{ is compact } \Longleftrightarrow\|Tk_z\| \to 0 \text{ as the point } &z \text{ approaches the boundary of } U^{\alpha}. 
\end{align*}
See Corollary 5.2.

 In Section 2, we recall the explicit formula for the Bergman kernel function $K_{U^{\alpha}}$ on $U^{\alpha}$, and define two families of automorphisms $\{\phi_{z_1}\}$ and $\{\varphi_{z_2}\}$ on $U^{\alpha}$. We collect some basic properties of these automorphisms and then give some key lemmas.  Using these automorphisms,  we state and prove the boundedness results in Section 3. In Section 4, we give a geometric  decomposition of $U^{\alpha}$. See Proposition 4.1. With such a decomposition, we further show that an operator can be approximated by a series of compact operators. See Proposition 4.4. We state and prove the compactness results  in Section 5. We give some remarks and possible directions to generalize our results in Section 6.
\section{Preliminaries}
From now on, we let $U^{\alpha}$ be the domain $\Omega$ and let $\langle \cdot,\cdot\rangle$ and $\|\cdot\|$ denote the corresponding $L^2$ inner product and $L^2$ norm respectively. We define the weighted measure $d\lambda(w)$ on $U^\alpha$ to be $\|K_{w}\|^2d\sigma(w)$. Let $\mathbb D$ denote the unit disc in $\mathbb C$.  {Given} {functions} of several variables $f$ and $g$, we use $f\lesssim g$ to denote that $f\leq Cg$ for a constant $C$. If $f\lesssim g$ and $g\lesssim f$, then we say $f$ is comparable to $g$ and write $f\simeq g$. 

The explicit formula for the Bergman kernel function on the Thullen domain was obtained by Bergman \cite{Bergman1} and D'Angelo \cite{1978D'A1,1994D'A2}. Recall that the Thullen domain
 \begin{align*}
U^{\alpha}=\{z=(z_1,z_2)\in\mathbb C^2:|z_1|^{\frac{2}{\alpha}}+|z_2|^2<1\}, \text{ where }{\alpha>0, \alpha\neq 1}.
\end{align*} For $(z_1,z_2), (w_1,w_2)\in U^{\alpha}$, the Bergman kernel $K_{U^{\alpha}}$ is given by
\begin{align}\label{2.1}
	K_{U^{\alpha}}\left((z_1,z_2);(\bar w_1,\bar w_2)\right)= \frac{({\alpha}+1)(1-z_2\bar w_2)^{\alpha}+(\alpha-1)z_1\bar w_1}{\pi^2(1-z_2\bar w_2)^{2-\alpha}\left((1-z_2\bar w_2)^\alpha-{z_1\bar w_1}\right)^3}.
\end{align}
On the diagonal of $U^{\alpha}\times U^{\alpha}$,
 \begin{align}
 K_{U^{\alpha}}\left(w_1,w_2;\bar w_1,\bar w_2\right)&=\frac{({\alpha}+1)(1-| w_2|^2)^{\alpha}+({\alpha}-1) |w_1|^2}{\pi^2(1-|w_2|^2)^{2-\alpha}\left((1-|w_2|^2)^\alpha-{|w_1|}^2\right)^3}.\\
 &= \frac{({\alpha}+1)+(\alpha-1)\frac{|w_1|^2}{(1-|w_2|^2)^\alpha}}{\pi^2(1-|w_2|^2)^{2+\alpha}\left(1-\frac{|w_1|^2}{(1-|w_2|^2)^\alpha}\right)^3}.
 \end{align}
 For $(w_1,w_2)\in U^{\alpha}$, we have $\frac{|w_1|^2}{(1-|w_2|^2)^\alpha}<1$. Therefore $({\alpha}+1)+(\alpha-1)\frac{|w_1|^2}{(1-|w_2|^2)^\alpha}\simeq 1$ and 
 \begin{align}\label{KD}
 K_{U^{\alpha}}\left(w_1,w_2;\bar w_1,\bar w_2\right)
 &\simeq \frac{1}{(1-|w_2|^2)^{2+\alpha}\left(1-\frac{|w_1|^2}{(1-|w_2|^2)^\alpha}\right)^3}.
 \end{align}
 Similarly, we can obtain an estimate for the absolute value of $K_{U^{\alpha}}$ off the diagonal: 
 \begin{align}\label{KD1}
 |K_{U^{\alpha}}\left(z_1,z_2;\bar w_1,\bar w_2\right)|
 &\simeq \frac{1}{|1-z_2\bar w_2|^{2+\alpha}\left|1-\frac{z_1\bar w_1}{(1-z_2 \bar w_2)^\alpha}\right|^3}.
 \end{align}
We will use these estimates to simplify the computations that involves $K_{U^\alpha}$.  
 
As mentioned in Section 1, the holomorphic automorphism group on $U^\alpha$ is not transitive.  Still, for each point $z$ in $U^{\alpha}$, an (in general not holomorphic) automorphism of $U^{\alpha}$ that  sends $z$  to the origin can be constructed  and used to estimate the Bergman kernel $K_{U^{\alpha}}(\cdot;\bar z)$.  Our construction of such a mapping are as follows. For $(z_1,z_2)\in U^{\alpha}$, we define two mappings $\varphi_{z_2}$ and $\phi_{z_1}$ on $U^{\alpha}$:
 \begin{align}
 \varphi_{z_2}(w_1,w_2)&=\left(w_1\left(\frac{\sqrt{(1-|z_2|^2)}}{1-\bar z_2 w_2}\right)^\alpha,\frac{z_2-w_2}{1-\bar z_2 w_2}\right),\\\label{phi}
  \phi_{z_1}(w_1,w_2)&=\left(\frac{z_1-w_1}{1-\bar z_1 w_1},w_2\left(\frac{\sqrt{(1-|z_1|^2)}}{1-\bar z_1 w_1}\right)\sqrt{\frac{(1-|w_1|^2)(1-\left|\frac{z_1-w_1}{1-\bar z_1 w_1}\right|^{\frac{2}{\alpha}})}{(1-|w_1|^{\frac{2}{\alpha}})(1-\left|\frac{z_1-w_1}{1-\bar z_1 w_1}\right|^{2})}}\right).
 \end{align}
 For simplicity, we let $\phi^{(1)}(w)$ and $\phi^{(2)}(w)$ denote the first and second coordinates of $\phi_{z_1}(w)$. Then $\phi^{(2)}(w)$ can also be expressed as
 \begin{align}
\phi^{(2)}(w)=w_2\left(\frac{\sqrt{(1-|z_1|^2)}}{1-\bar z_1 w_1}\right)\sqrt{\frac{(1-|w_1|^2)(1-\left|\phi^{(1)}(w)\right|^{\frac{2}{\alpha}})}{(1-|w_1|^{\frac{2}{\alpha}})(1-\left|\phi^{(1)}(w)\right|^{2})}}.
\end{align}
 Both $\varphi_{z_2}$ and $\phi_{z_1}$ are involutions on $U^{\alpha}$. $\varphi_{z_2}$ is a holomorphic automorphism while $\phi_{z_1}$ is not holomorphic unless $\alpha=1$. When $\alpha=1$, the domain $U^{1}$ is the unit ball $\mathbb B^2$. 
 
 By the biholomorphic transformation formula for the Bergman kernel (See \cite{Krantz}), 
 \begin{align}\label{trans}
 K_{U^{\alpha}}(w;\bar w)=K_{U^\alpha}(\varphi_{z_2}(w);\overline{\varphi_{z_2}(w)})|J\varphi_{z_2}(w)|^2.
 \end{align}
An analogue of formula (\ref{trans}) is not true for $\phi_{z_2}$ in general since $\phi_{z_2}$ is not biholomorphic. Instead, we have the following estimate:
 \begin{lem}
 	Let $z$ and $w$ be in $U^{\alpha}$ and let $\phi_{z_1}(w)$ be as in (\ref{phi}). Then
 	\begin{align}\label{1.13}
 	K_{U^{\alpha}}(w;\bar w)\simeq K_{U^{\alpha}}(\phi_{z_1}(w);\overline{\phi_{z_1}(w)})|J\phi_{z_1}(w)|^2\frac{(1-\left|\phi^{(2)}(w)\right|^{2})^{\alpha-1}}{{(1-|w_2|^2)^{\alpha-1}}}.
 	\end{align}
 \end{lem}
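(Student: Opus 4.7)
The plan is to exploit the fact that $\phi_{z_1}$ is a ``skew product'': writing $\psi(w_1)=(z_1-w_1)/(1-\bar z_1 w_1)$, the map takes the form $(w_1,w_2)\mapsto(\psi(w_1),\,w_2\,c(w_1))$ where $c(w_1)$ denotes the scalar factor appearing in the definition of $\phi^{(2)}$. Because $\phi^{(1)}$ depends on $w_1$ alone, the real $4\times 4$ Jacobian of $\phi_{z_1}$ is block--lower--triangular in the $(w_1,w_2)$-splitting, so its determinant factors as $|\psi'(w_1)|^2\cdot|c(w_1)|^2$: the first factor is the Cauchy--Riemann Jacobian of the holomorphic M\"obius map, and the second is the real Jacobian of the $\mathbb C$-linear map $w_2\mapsto c(w_1)\,w_2$ with $w_1$ held fixed.

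I would then apply the standard M\"obius identity $1-|\psi(w_1)|^2=(1-|z_1|^2)(1-|w_1|^2)/|1-\bar z_1 w_1|^2$ inside $|c(w_1)|^2$: the factor $(1-|z_1|^2)/|1-\bar z_1 w_1|^2$ cancels against $(1-|w_1|^2)/(1-|\phi^{(1)}|^2)$, collapsing the expression to $|c(w_1)|^2=(1-|\phi^{(1)}|^{2/\alpha})/(1-|w_1|^{2/\alpha})$. This yields two consequences I will use throughout:
\[
|\phi^{(2)}(w)|^2=|w_2|^2\,\frac{1-|\phi^{(1)}|^{2/\alpha}}{1-|w_1|^{2/\alpha}},
\qquad
|J\phi_{z_1}(w)|^2=\left(\frac{1-|\phi^{(1)}|^2}{1-|w_1|^2}\right)^{\!2}\frac{1-|\phi^{(1)}|^{2/\alpha}}{1-|w_1|^{2/\alpha}}.
\]

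Next, I substitute these formulas together with the diagonal estimate (\ref{KD}) at both $w$ and $\phi_{z_1}(w)$ into the right-hand side of (\ref{1.13}). After multiplying out and grouping $\alpha$-powers, the desired $\simeq$ reduces to the comparison
\[
\frac{(1-|\phi^{(2)}|^2)^\alpha-|\phi^{(1)}|^2}{(1-|\phi^{(2)}|^2)^{\alpha-1}(1-|\phi^{(1)}|^{2/\alpha})} \;\simeq\; \frac{(1-|w_2|^2)^\alpha-|w_1|^2}{(1-|w_2|^2)^{\alpha-1}(1-|w_1|^{2/\alpha})}.
\]
To close this I invoke the elementary fact that $(1-Y^\alpha)/(1-Y)$ is bounded above and below by constants depending only on $\alpha$ for $Y\in[0,1)$, which permits exchanging $(1-Y^\alpha)$ with $(1-Y)$ at the cost of $\alpha$-dependent constants. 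Applied to $Y=|w_1|^{2/\alpha}/(1-|w_2|^2)$ and to its analogue at $\phi_{z_1}(w)$, the displayed comparison collapses to the exact identity $\rho(\phi_{z_1}(w))/(1-|\phi^{(1)}|^{2/\alpha})=\rho(w)/(1-|w_1|^{2/\alpha})$, where $\rho(w)=1-|w_1|^{2/\alpha}-|w_2|^2$ is the defining function; this is immediate from the formula for $|\phi^{(2)}|^2$ above.

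The main obstacle is the bookkeeping in this final reduction. Numerous $\alpha$-dependent powers must line up so that the kernel poles cancel exactly, with the prefactor $(1-|\phi^{(2)}|^2)^{\alpha-1}/(1-|w_2|^2)^{\alpha-1}$ absorbing precisely the mismatch introduced by $\phi_{z_1}$ being non-holomorphic. The comparability $(1-Y^\alpha)\simeq(1-Y)$ may only be invoked where an $\alpha$-dependent multiplicative constant is acceptable; elsewhere the cancellations must be exact, and the key identity $\rho(\phi_{z_1}(w))(1-|w_1|^{2/\alpha})=\rho(w)(1-|\phi^{(1)}|^{2/\alpha})$ does the decisive work.
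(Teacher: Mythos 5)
Your proposal is correct and follows essentially the same route as the paper: the identity $|\phi^{(2)}(w)|^2=|w_2|^2(1-|\phi^{(1)}(w)|^{2/\alpha})/(1-|w_1|^{2/\alpha})$ is the paper's key step (1.14), the triangular Jacobian computation and the use of the diagonal estimate (\ref{KD}) together with $\frac{1-r^2}{1-r^{2/\alpha}}\simeq 1$ are identical, and your final exact identity $\rho(\phi_{z_1}(w))(1-|w_1|^{2/\alpha})=\rho(w)(1-|\phi^{(1)}(w)|^{2/\alpha})$ is just the paper's observation that the factors $1-\tfrac{|w_2|^2}{1-|w_1|^{2/\alpha}}$ match under (1.14), written in terms of the defining function. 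The only cosmetic difference is that you keep the Jacobian as an exact formula where the paper immediately passes to a comparability.
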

 \begin{proof}  Since $
 1-\left|\phi^{(1)}(w)\right|^2=\frac{(1-|z_1|^2)(1-|w_1|^2)}{|1-\bar z_1 w_1|^2}$, we have
 	\begin{align}\label{1.14}
 	\frac{|\phi^{(2)}(w)|^2}{(1-|\phi^{(1)}(w)|^{\frac{2}{\alpha}})}&=\frac{|w_2|^2{(1-|z_1|^2)}(1-|w_1|^2)(1-\left|\phi^{(1)}(w)\right|^{\frac{2}{\alpha}})}{|1-\bar z_1 w_1|^2(1-|w_1|^{\frac{2}{\alpha}})(1-\left|\phi^{(1)}(w)\right|^{2})(1-|\phi^{(1)}(w)|^{\frac{2}{\alpha}})}\nonumber\\
 	&=\frac{|w_2|^2}{(1-|w|^{\frac{2}{\alpha}})},
 	\end{align}
 	or equivalently, $
|\phi^{(2)}(w)|^2=|w_2|^2{(1-|\phi^{(1)}(w)|^{\frac{2}{\alpha}
}})(1-|w_1|^{\frac{2}{\alpha}})^{-1}.$

By (\ref{KD}) we have 
 \begin{align}
K_{U^{\alpha}}\left(w_1,w_2;\bar w_1,\bar w_2\right)\nonumber
&\simeq
 {(1-|w_2|^2)^{-2-\alpha}\left(1-\frac{|w_1|^2}{(1-|w_2|^2)^\alpha}\right)^{-3}}\nonumber\\
 &\simeq
 {(1-|w_2|^2)^{-2-\alpha}\left(1-\frac{|w_1|^{\frac{2}{\alpha}}}{(1-|w_2|^2)}\right)^{-3}}\nonumber\\
  &=
  {(1-|w_2|^2)^{1-\alpha}\left(1-|w_2|^2-{|w_1|^{\frac{2}{\alpha}}}\right)^{-3}}\nonumber\\
   &=
   {(1-|w_2|^2)^{1-\alpha}\left(1-\frac{|w_2|^2}{1-{|w_1|^{\frac{2}{\alpha}}}}\right)^{-3}(1-|w_1|^{\frac{2}{\alpha}})^{-3}}\nonumber\\
   &\simeq
   {(1-|w_2|^2)^{1-\alpha}\left(1-\frac{|w_2|^2}{1-{|w_1|^{\frac{2}{\alpha}}}}\right)^{-3}(1-|w_1|^2)^{-3}}.
 \end{align}
The second and the last approximation signs above hold by the fact that $\frac{1-r^2}{1-r^p}\simeq 1$ for any $r\in [0,1)$ and $p>0$. Similarly, we obtain 
  \begin{align}\label{1.16}
 K_{U^{\alpha}}\left(\phi_{z_1}(w);\overline{\phi_{z_1}(w)}\right)
 &\simeq
 {(1-|\phi^{(2)}(w)|^2)^{1-\alpha}\left(1-\frac{|\phi^{(2)}(w)|^2}{1-{|\phi^{(1)}(w)|^{\frac{2}{\alpha}}}}\right)^{-3}(1-|\phi^{(1)}(w)|^{2})^{-3}}.
 \end{align}
 Applying (\ref{1.14}) to the right hand side of (\ref{1.16}) then yields:
 \begin{align}\label{1.17}
 K_{U^{\alpha}}\left(\phi_{z_1}(w);\overline{\phi_{z_1}(w)}\right)&\simeq{(1-|\phi^{(2)}(w)|^2)^{1-\alpha}\left(1-\frac{|w_2|^2}{1-{|w_1|^{\frac{2}{\alpha}}}}\right)^{-3}(1-|\phi^{(1)}(w)|^{2})^{-3}}.
 \end{align}
 We turn to compute $J\phi_{z_1}(w)$.
 Since $\frac{\partial }{\partial w_2}\phi^{(1)}(w)=0$, we have 
 \begin{align}\label{1.18}
 J\phi_{z_1}(w)&=\frac{\partial }{\partial w_1}\phi^{(1)}(w)\frac{\partial }{\partial w_2}\phi^{(2)}(w)\nonumber\\
 &=\frac{(1-|z_1|^2)}{(1-\bar z_1 w_1)^2}\frac{\sqrt{(1-|z_1|^2)}}{1-\bar z_1 w_1}\sqrt{\frac{(1-|w_1|^2)(1-\left|\phi^{(1)}(w)\right|^{\frac{2}{\alpha}})}{(1-|w_1|^{\frac{2}{\alpha}})(1-\left|\phi^{(1)}(w)\right|^{2})}}\nonumber\\
&\simeq\left(\frac{1-|z_1|^2}{(1-\bar z_1 w_1)^2}\right)^{\frac{3}{2}}.
 \end{align}
 Combining (\ref{1.17}) and (\ref{1.18}) gives the desired estimate (\ref{1.13}):
 \begin{align}
 &|J\phi_{z_1}(w)|^2K_{U^{\alpha}}\left(\phi_{z_1}(w);\overline{\phi_{z_1}(w)}\right)\nonumber\\
 \simeq&\left(\frac{1-|z_1|^2}{|1-\bar z_1 w_1|^2}\right)^3{(1-|\phi^{(2)}(w)|^2)^{1-\alpha}\left(1-\frac{|w_2|^2}{1-{|w_1|^{\frac{2}{\alpha}}}}\right)^{-3}(1-|\phi^{(1)}(w)|^{2})^{-3}}
\nonumber\\
\simeq&{(1-|\phi^{(2)}(w)|^2)^{1-\alpha}\left(1-\frac{|w_2|^2}{1-{|w_1|^{\frac{2}{\alpha}}}}\right)^{-3}(1-|w_1|^{2})^{-3}}
\nonumber\\
\simeq&\frac{(1-|\phi^{(2)}(w)|^2)^{1-\alpha}}{(1-|w_2|^2)^{1-\alpha}}K_{U^\alpha}(w,\bar w).\nonumber
\end{align}
 \end{proof}

   We need a Forelli-Rudin type estimate on the domain $U^{\alpha}$. Such an estimate can be proved  using the following lemma. See for example \cite{Zhu}.
  	\begin{lem}Let $\theta$ denote Lebesgue measure on the unit sphere $\mathbb S^k\subset\mathbb C^k$.
  		For $\epsilon<1$ and $w\in\mathbb B^k$, let 
  		\begin{equation}\label{**}
  		a_{\epsilon,\delta}(w)=\int_{\mathbb B^k}\frac{(1-|\eta|^2)^{-\epsilon}}{|1-\langle w,\eta\rangle |^{1+k-\epsilon-\delta}}d\sigma(\eta),
  		\end{equation}
  		and let
  		\begin{equation}\label{ }
  		b_\delta(w)=\int_{\mathbb S^k}\frac{1}{|1-\langle w,\eta\rangle |^{k-\delta}}d\theta(\eta).
  		\end{equation}
  		Then \begin{enumerate}
  			\item for $\delta>0$, both $a_{\epsilon,\delta}$ and $b_{\delta}$ are bounded on $\mathbb B^k$.
  			\item for $\delta=0$, both $a_{\epsilon,\delta}(w)$ and $b_{\delta}(w)$ are comparable to the function $-\log(1-|w|^2)$.
  			\item for $\delta<0$, both $a_{\epsilon,\delta}(w)$ and $b_{\delta}(w)$ are comparable to the function $(1-|w|^2)^{\delta}$.
  		\end{enumerate}
  	\end{lem}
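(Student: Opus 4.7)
My plan is to reduce both integrals to power-series computations on $\mathbb{B}^k$ and read off the three cases from Stirling's formula, following the classical Forelli--Rudin strategy as in Rudin's book on the ball. I would handle $b_\delta$ first and then obtain $a_{\epsilon,\delta}$ by integrating in polar coordinates.

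For $b_\delta$, set $s=(k-\delta)/2$ and factor $|1-\langle w,\eta\rangle|^{-(k-\delta)} = (1-\langle w,\eta\rangle)^{-s}(1-\langle \eta,w\rangle)^{-s}$. Expand both factors by the binomial series $(1-x)^{-s}=\sum_n \frac{\Gamma(n+s)}{\Gamma(s)\,n!}x^n$ and integrate term-by-term over $\mathbb{S}^k$. By $U(k)$-invariance, the cross terms $\int_{\mathbb{S}^k}\langle w,\eta\rangle^n\langle\eta,w\rangle^m\,d\theta(\eta)$ vanish unless $n=m$, and when $n=m$ they equal $c_{n,k}|w|^{2n}$ with $c_{n,k}\simeq n^{-(k-1)}$. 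Combined with Stirling this yields
\begin{equation*}
b_\delta(w) \simeq \sum_{n\geq 0} n^{-1-\delta}|w|^{2n}.
\end{equation*}
Each of the three cases now follows from a standard estimate for such series: for $\delta>0$ the series converges on $\overline{\mathbb{B}^k}$; for $\delta=0$ one has $\sum |w|^{2n}/n = -\log(1-|w|^2)$; and for $\delta<0$ one compares directly with the binomial expansion $(1-|w|^2)^\delta = \sum \frac{\Gamma(n-\delta)}{\Gamma(-\delta)\,n!}|w|^{2n}$, whose coefficients are also $\simeq n^{-1-\delta}$.

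For $a_{\epsilon,\delta}$, use polar coordinates $\eta=r\xi$, $r\in[0,1]$, $\xi\in\mathbb{S}^k$, and rewrite
\begin{equation*}
a_{\epsilon,\delta}(w) = C\int_0^1 r^{2k-1}(1-r^2)^{-\epsilon}\int_{\mathbb{S}^k}\frac{d\theta(\xi)}{|1-\langle rw,\xi\rangle|^{1+k-\epsilon-\delta}}\, dr.
\end{equation*}
Since $k-(\epsilon+\delta-1)=1+k-\epsilon-\delta$, the inner integral is exactly $b_{\epsilon+\delta-1}(rw)$, to which I apply the already-established trichotomy. After substituting $t=r^2$, the outer integral reduces to the classical Beta-type integral $\int_0^1 t^{k-1}(1-t)^{-\epsilon}(1-t|w|^2)^{\epsilon+\delta-1}dt$ (with logarithmic or bounded analogues when $\epsilon+\delta=1$ or $\epsilon+\delta>1$). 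The substitution $s=1-t$ followed by $u=s/(1-|w|^2)$ produces $(1-|w|^2)^\delta\int_0^{1/(1-|w|^2)}u^{-\epsilon}(1+u)^{\epsilon+\delta-1}du$, which converges at $\infty$ iff $\delta<0$, giving case~(3); diverges logarithmically when $\delta=0$, giving case~(2); and is dominated by a convergent integral (or bounded by elementary means when $\epsilon+\delta\geq 1$) when $\delta>0$, giving case~(1).

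The main obstacle will be the $\delta<0$ case of $b_\delta$ together with the matching of powers in the $r$-integration. For $b_\delta$, identifying the asymptotic $\sum n^{-1-\delta}|w|^{2n}\simeq(1-|w|^2)^\delta$ requires Stirling applied carefully to make the two-sided comparison with the explicit coefficients of $(1-|w|^2)^\delta$ uniform in $|w|$. For $a_{\epsilon,\delta}$, the bookkeeping at the borderline cases $\epsilon+\delta=1$ (inner log) and $\delta=0$ (outer log) must be done with care to avoid producing spurious logarithmic factors of lower order; splitting the $r$-integral into $r\leq 1/2$ (where the integrand is uniformly bounded in $w$) and $r$ near $1$ (where the substitution and Stirling give the sharp asymptotic) handles this cleanly.
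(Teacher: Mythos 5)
The paper itself offers no proof of this lemma (it is quoted from the cited reference of Zhu), and your argument is precisely the classical proof found there: binomial expansion of $(1-\langle w,\eta\rangle)^{-s}(1-\langle\eta,w\rangle)^{-s}$ with $U(k)$-invariance and Stirling for $b_\delta$, then polar coordinates reducing $a_{\epsilon,\delta}$ to $b_{\epsilon+\delta-1}(rw)$ and a Beta-type integral, so the approach is correct and complete in outline. The one slip is in the $\delta>0$ case of the outer integral: $\int_0^{1/(1-|w|^2)}u^{-\epsilon}(1+u)^{\epsilon+\delta-1}\,du$ is \emph{not} dominated by a convergent integral (the integrand decays only like $u^{\delta-1}$ with $\delta-1>-1$), but rather grows like $(1-|w|^2)^{-\delta}$ and exactly cancels the prefactor $(1-|w|^2)^{\delta}$; more simply, boundedness follows from $(1-t|w|^2)^{\epsilon+\delta-1}\leq(1-t)^{\epsilon+\delta-1}$ together with $\int_0^1(1-t)^{\delta-1}\,dt<\infty$.
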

  	Here we state a Forelli-Rudin type estimate on $U^{\alpha}$:
  	\begin{lem}
  	For $\epsilon_1<\alpha+1, \epsilon_2<1$, $\epsilon_3>0$, and $z\in U^{\alpha}$, let
  	\begin{align}
  	I_{\delta_1,\delta_2}(z)=\int_{U^{\alpha}}\frac{(1-|w_2|^2)^{-\epsilon_2}(1-\frac{|w_1|^2}{(1-|w_2|^2)^{\alpha}})^{-\epsilon_1}(1-\frac{|z_1|^2}{(1-|z_2|^2)^{\alpha}})^{\epsilon_3}}{|1-\bar z_2 w_2|^{2+\alpha-\epsilon_2-\delta_2}\left|1-\frac{\bar z_1 w_1}{(1- |z_2|^2)^{\frac{\alpha}{2}}}\right|^{3-\epsilon_1+\epsilon_3-\delta_1}}d\sigma(w).
  	\end{align}
  	Then for $\delta_1\geq 0, \delta_2>0$, $I_{\delta_1,\delta_2}$ is bounded on $U^{\alpha}$.
  	\end{lem}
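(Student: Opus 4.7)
The plan is to reduce $I_{\delta_1,\delta_2}$ to iterated integrals on the unit disc $\mathbb D$ by a fiberwise change of variables, and then to apply Lemma 2.2 (with $k=1$) twice.

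First, I apply Fubini and integrate over $w_1$ for each fixed $w_2\in\mathbb D$. The $w_1$-fiber $\{|w_1|^2<(1-|w_2|^2)^{\alpha}\}$ is a disc of radius $(1-|w_2|^2)^{\alpha/2}$, so the substitution $w_1=(1-|w_2|^2)^{\alpha/2}\eta_1$ transports it onto $\mathbb D$ with real Jacobian $(1-|w_2|^2)^{\alpha}$. Under this substitution, $1-\frac{|w_1|^2}{(1-|w_2|^2)^{\alpha}}$ becomes $1-|\eta_1|^2$, and $\frac{\bar z_1 w_1}{(1-|z_2|^2)^{\alpha/2}}$ becomes $\zeta\,\eta_1$ where
$$\zeta=\zeta(w_2,z)=\frac{\bar z_1\,(1-|w_2|^2)^{\alpha/2}}{(1-|z_2|^2)^{\alpha/2}}\in\mathbb D,$$
since $|z_1|<(1-|z_2|^2)^{\alpha/2}$.

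Second, I apply Lemma 2.2 (with $k=1$) to the inner integral
$$\int_{\mathbb D}\frac{(1-|\eta_1|^2)^{-\epsilon_1}}{|1-\zeta\eta_1|^{3-\epsilon_1+\epsilon_3-\delta_1}}\,d\sigma(\eta_1).$$
Matching notation, the relevant parameter is $\delta=\delta_1-\epsilon_3-1$, so in all three cases the inner integral is dominated by $(1-|\zeta|^2)^{\min(0,\delta_1-\epsilon_3-1)}$, up to a logarithmic correction in the borderline case $\delta=0$. Substituting this estimate back and using $1-|\zeta|^2=1-A(1-|w_2|^2)^{\alpha}$ with $A=|z_1|^2/(1-|z_2|^2)^{\alpha}\in[0,1)$, the task reduces to bounding
$$(1-A)^{\epsilon_3}\int_{\mathbb D}\frac{(1-|w_2|^2)^{\alpha-\epsilon_2}\bigl(1-A(1-|w_2|^2)^{\alpha}\bigr)^{\min(0,\delta_1-\epsilon_3-1)}}{|1-\bar z_2 w_2|^{2+\alpha-\epsilon_2-\delta_2}}\,d\sigma(w_2)$$
uniformly in $z\in U^{\alpha}$. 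In the easy range $\delta_1\geq\epsilon_3+1$ the inner-integral estimate is bounded (or only mildly logarithmic), and the remaining outer integral is a standard Forelli-Rudin integral on $\mathbb D$ with residual parameter $\delta=\delta_2>0$, handled directly by Lemma 2.2(1) (the requirement $\epsilon_2-\alpha<1$ being covered by $\epsilon_2<1$).

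The hard part will be the range $\delta_1<\epsilon_3+1$, where the inner estimate contributes the genuinely singular factor $\bigl(1-A(1-|w_2|^2)^{\alpha}\bigr)^{\delta_1-\epsilon_3-1}$. Here I use the elementary comparison
$$1-A(1-|w_2|^2)^{\alpha}\simeq (1-A)+|w_2|^2,$$
which follows from $1-(1-t)^{\alpha}\simeq t$ on $[0,1]$, and split the $w_2$-integration into the regions $|w_2|^2\leq 1-A$ and $|w_2|^2>1-A$. On the first region, all remaining factors are uniformly bounded (since $|w_2|\leq\sqrt{1-A}$ forces $|1-\bar z_2 w_2|\geq 1-\sqrt{1-A}$ and $1-|w_2|^2\geq A$) and the area is $\simeq 1-A$; its contribution, multiplied by $(1-A)^{\epsilon_3}$, produces the harmless factor $(1-A)^{\delta_1}\leq 1$. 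On the second region, $(1-A)+|w_2|^2\simeq|w_2|^2$, and after a dyadic subdivision into annuli $\{2^j(1-A)<|w_2|^2\leq 2^{j+1}(1-A)\}$ one can apply Lemma 2.2(1) on each annulus and then sum geometrically, the $j$-dependence being controlled by the sign of $\delta_1-\epsilon_3$ in the exponent; the geometric sum yields either a bounded constant or a power $(1-A)^{\delta_1-\epsilon_3}$, which combined with the prefactor $(1-A)^{\epsilon_3}$ gives at worst $(1-A)^{\delta_1}\leq 1$ thanks to the hypothesis $\delta_1\geq 0$. Combining the two regions yields the desired uniform bound.
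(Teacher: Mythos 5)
Your first two steps coincide with the paper's: the fiberwise substitution $w_1=(1-|w_2|^2)^{\alpha/2}\eta_1$ reducing $I_{\delta_1,\delta_2}$ to an iterated integral over $\mathbb D\times\mathbb D$, followed by Lemma 2.2 (with $k=1$, $\epsilon=\epsilon_1$, $\delta=\delta_1-\epsilon_3-1$) applied to the inner integral, are exactly the opening of the paper's proof. (Both arguments tacitly require $\epsilon_1<1$ rather than the stated $\epsilon_1<\alpha+1$, since otherwise even $\int_{\mathbb D}(1-|\eta_1|^2)^{-\epsilon_1}d\sigma(\eta_1)$ diverges; this is consistent with how the lemma is later invoked.) Where you genuinely diverge is the endgame: the paper passes to polar coordinates, uses only the one-sided bound $1-A(1-s)^{\alpha}\gtrsim s$ with $s=|w_2|^2$, and finishes with explicit radial estimates, whereas you keep the two-sided comparison $1-A(1-|w_2|^2)^{\alpha}\simeq(1-A)+|w_2|^2$ and split at $|w_2|^2=1-A$ plus dyadic annuli. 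Your comparison is correct (since $1-(1-t)^{\alpha}\simeq t$ and $(1-A)+At\simeq(1-A)+t$), and retaining the $(1-A)$ term is an advantage near $\delta_1=0$, where the paper's own reduction to $\int_0^{1/2}s^{\delta_1-1}ds$ degenerates.

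Two of your stated estimates, however, do not deliver what you claim. First, on $\{|w_2|^2\le 1-A\}$ the ``sup times area'' bound is only legitimate when $A$ is bounded away from $0$: for small $A$ the lower bounds $|1-\bar z_2w_2|\ge 1-\sqrt{1-A}$ and $1-|w_2|^2\ge A$ degenerate and the remaining factors are not uniformly bounded. This is harmless---for, say, $A\le 1/2$ the factor $((1-A)+|w_2|^2)^{\delta_1-\epsilon_3-1}$ is itself $O(1)$ and the whole integral is a standard Forelli--Rudin integral---but it must be said. Second, and more seriously, applying Lemma 2.2(1) on each annulus gives only an $O(1)$ bound per annulus, so the sum it produces is $\sum_j(2^j(1-A))^{\delta_1-\epsilon_3-1}\simeq(1-A)^{\delta_1-\epsilon_3-1}$, which after the prefactor $(1-A)^{\epsilon_3}$ yields $(1-A)^{\delta_1-1}$: unbounded for $0\le\delta_1<1$, i.e.\ exactly in the hard range. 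To reach your claimed exponent $(1-A)^{\delta_1-\epsilon_3}$ you need the refined per-annulus bound: on interior annuli (those with $2^{j+1}(1-A)\le 1/2$) the Forelli--Rudin integrand is uniformly bounded, so the annulus integral is $\lesssim 2^j(1-A)$, while the $O(1)$ outermost annuli have $(2^j(1-A))^{\delta_1-\epsilon_3-1}\simeq 1$ and can absorb Lemma 2.2(1) directly. With that correction the geometric sum gives $\max\bigl(1,(1-A)^{\delta_1-\epsilon_3}\bigr)$ (a logarithm at $\delta_1=\epsilon_3$), and the prefactor together with $\delta_1\ge 0$ finishes the proof. So the plan is sound and in its endgame genuinely different from (and in one respect more robust than) the paper's, but the dyadic step is false as literally written and needs the area-proportional estimate made explicit.
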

  	\begin{proof}
  		We first transform $I_{\delta_1,\delta_2}(z)$ into an integral on the polydisc $\mathbb D^2$:
  		\begin{align}\label{1.24}
  	I_{\delta_1,\delta_2}(z)=	&	\int_{U^{\alpha}}\frac{(1-|w_2|^2)^{-\epsilon_2}(1-\frac{|w_1|^2}{(1-|w_2|^2)^{\alpha}})^{-\epsilon_1}(1-\frac{|z_1|^2}{(1-|z_2|^2)^{\alpha}})^{\epsilon_3}}{|1-\bar z_2 w_2|^{2+\alpha-\epsilon_2-\delta_2}\left|1-\frac{\bar z_1 w_1}{(1-|z_2|^2)^{\frac{\alpha}{2}}}\right|^{3-\epsilon_1+\epsilon_3-\delta_1}}d\sigma(w)\nonumber\\
  	=&\int_{\mathbb D^2}\frac{(1-|w_2|^2)^{\alpha-\epsilon_2}(1-|t_1|^2)^{-\epsilon_1}(1-\frac{|z_1|^2}{(1-|z_2|^2)^{\alpha}})^{\epsilon_3}}{|1-\bar z_2 w_2|^{2+\alpha-\epsilon_2-\delta_2}\left|1-\frac{\bar z_1 t_1(1-|w_2|^2)^{\frac{\alpha}{2}}}{(1-|z_2|^2)^{\frac{\alpha}{2}}}\right|^{3-\epsilon_1+\epsilon_3-\delta_1}}d\sigma(t_1,w_2)
  	\nonumber\\
  	=&\int_{\mathbb D}\frac{(1-|w_2|^2)^{\alpha-\epsilon_2}}{|1-\bar z_2 w_2|^{2+\alpha-\epsilon_2-\delta_2}}\int_{\mathbb D}\frac{(1-|t_1|^2)^{-\epsilon_1}(1-\frac{|z_1|^2}{(1-|z_2|^2)^{\alpha}})^{\epsilon_3}}{\left|1-\frac{\bar z_1 t_1(1-|w_2|^2)^{\frac{\alpha}{2}}}{(1-|z_2|^2)^{\frac{\alpha}{2}}}\right|^{3-\epsilon_1+\epsilon_3-\delta_1}}d\sigma(t_1)d\sigma(w_2).
  		\end{align}
  	We consider two cases: $\delta_1\geq1$, and $1>\delta_1\geq0$.
  	
  		If $\delta_1\geq 1$, then Lemma 2.2 implies that 
  		\begin{align}
  		\label{1.25}
  \int_{\mathbb D}\frac{(1-|t_1|^2)^{-\epsilon_1}(1-\frac{|z_1|^2}{(1-|z_2|^2)^{\alpha}})^{\epsilon_3}}{\left|1-\frac{\bar z_1 t_1(1-|w_2|^2)^{\frac{\alpha}{2}}}{(1-|z_2|^2)^{\frac{\alpha}{2}}}\right|^{3-\epsilon_1+\epsilon_3-\delta_1}}d\sigma(t_1)<C_1\end{align}
  for some constant $C_1$.	Then substituting inequality (\ref{1.25}) into (\ref{1.24}) and applying Lemma 2.2 again yield 
  $$I_{\delta_1,\delta_2}(z)<C_1\int_{\mathbb D}\frac{(1-|w_2|^2)^{\alpha-\epsilon_2}}{|1-\bar z_2 w_2|^{2+\alpha-\epsilon_2-\delta_2}}d\sigma(w_2)<C_1C_2,$$
  for some constant $C_2$.
  
  If $1>\delta_1\geq0$, then Lemma 2.2 implies that
  \begin{align}
  \label{1.26}
  \int_{\mathbb D}\frac{(1-|t_1|^2)^{-\epsilon_1}(1-\frac{|z_1|^2}{(1-|z_2|^2)^{\alpha}})^{\epsilon_3}}{\left|1-\frac{\bar z_1 t_1(1-|w_2|^2)^{\alpha/2}}{(1-|z_2|^2)^{\frac{\alpha}{2}}}\right|^{3-\epsilon_1+\epsilon_3-\delta_1}}d\sigma(t_1)\simeq\left(1-\frac{|z_1|^2(1-|w_2|^2)^\alpha}{(1-|z_2|^2 )^{\alpha}}\right)^{\delta_1-1}.\end{align}
  Substituting (\ref{1.26}) into (\ref{1.24}) yields:
  \begin{align}\label{1.27}
 	I_{\delta_1,\delta_2}(z)
 	\simeq&\int_{\mathbb D}\frac{(1-|w_2|^2)^{\alpha-\epsilon_2}}{|1-\bar z_2 w_2|^{2+\alpha-\epsilon_2-\delta_2}}\left(1-\frac{|z_1|^2(1-|w_2|^2)^\alpha}{(1-|z_2|^2 )^{\alpha}}\right)^{\delta_1-1}d\sigma(w_2).
  \end{align}
  Using polar coordinates $t_2=r\eta$ for $r\in [0,1)$ and $\eta\in \mathbb S^1$, we have 
   \begin{align}\label{1.29}
   I_{\delta_1,\delta_2}(z)
   \simeq&\int_{0}^{1}\int_{\mathbb S^1}\frac{r(1-r^2)^{\alpha-\epsilon_2}}{|1-\bar z_2 r\eta|^{2+\alpha-\epsilon_2-\delta_2}}d\theta(\eta)\left(1-\frac{|z_1|^2(1-r^2)^\alpha}{(1-|z_2|^2)^{\alpha}}\right)^{\delta_1-1}dr.
   \end{align}
   Applying Lemma 2.2 and the substitution $s=r^2$ to the inner integral gives
    \begin{align}\label{1.30}
    I_{\delta_1,\delta_2}(z)
    \simeq&\int_{0}^{1}\frac{(1-s)^{\alpha-\epsilon_2}}{(1- |z_2|^2s )^{1+\alpha-\epsilon_2-\delta_2}}\left(1-\frac{|z_1|^2(1-s)^\alpha}{(1-|z_2|^2)^{\alpha}}\right)^{\delta_1-1}ds\nonumber\\
     \simeq&\int_{0}^{1}\frac{(1-s)^{\alpha-\epsilon_2}}{(1- |z_2|^2s )^{1+\alpha-\epsilon_2-\delta_2}}\left(1-\frac{|z_1|^{\frac{2}{\alpha}}(1-s)}{1-|z_2|^2}\right)^{\delta_1-1}ds
     \nonumber\\
     \lesssim&\int_{0}^{1}\frac{(1-s)^{\alpha-\epsilon_2}}{(1- |z_2|^2s )^{1+\alpha-\epsilon_2-\delta_2}}s^{\delta_1-1}ds
       \nonumber\\
       =&\int_{0}^{1/2}\frac{(1-s)^{\alpha-\epsilon_2}}{(1- |z_2|^2s )^{1+\alpha-\epsilon_2-\delta_2}}s^{\delta_1-1}ds+\int_{1/2}^{1}\frac{(1-s)^{\alpha-\epsilon_2}}{(1- |z_2|^2s )^{1+\alpha-\epsilon_2-\delta_2}}s^{\delta_1-1}ds.
    \end{align}
Since 
$\int_{0}^{1/2}\frac{(1-s)^{\alpha-\epsilon_2}}{(1- |z_2|^2s )^{1+\alpha-\epsilon_2-\delta_2}}s^{\delta_1-1}ds\lesssim \int_{0}^{1/2}s^{\delta_1-1}ds\lesssim 1$,
it suffices to show that 
\begin{align}\int_{1/2}^{1}\frac{(1-s)^{\alpha-\epsilon_2}}{(1- |z_2|^2s )^{1+\alpha-\epsilon_2-\delta_2}}s^{\delta_1-1}ds\lesssim 1.\end{align}
When $|z_2|\in [0,1/2]$, $(1-|z_2|^2s)\simeq 1$. We have
   \begin{align}
&\int_{1/2}^{1}\frac{(1-s)^{\alpha-\epsilon_2}}{(1- |z_2|^2s )^{1+\alpha-\epsilon_2-\delta_2}}s^{\delta_1-1}ds\lesssim\int_{1/2}^{1}{(1-s)^{\alpha-\epsilon_2}}ds\lesssim 1,
  \end{align}
  When $|z_2|\in [1/2,1)$, we have
   \begin{align}
   &\int_{1/2}^{1}\frac{(1-s)^{\alpha-\epsilon_2}}{(1- |z_2|^2s )^{1+\alpha-\epsilon_2-\delta_2}}s^{\delta_1-1}ds\nonumber\\\lesssim&\int_{1/2}^{1}\frac{(1-s)^{\alpha-\epsilon_2}}{(1- |z_2|^2s )^{1+\alpha-\epsilon_2-\delta_2}}ds\nonumber\\
   \lesssim&\frac{-(1-s)^{1+\alpha-\epsilon_2}}{(1- |z_2|^2s )^{1+\alpha-\epsilon_2-\delta_2}}\bigg|_{\frac{1}{2}}^1+\int_{1/2}^{1}\frac{|z_2|^2(1-s)^{1+\alpha-\epsilon_2}}{(1- |z_2|^2s )^{2+\alpha-\epsilon_2-\delta_2}}ds\nonumber\\
  \lesssim &  1+\int_{1/2}^{1}\frac{|z_2|^2}{(1- |z_2|^2s )^{1-\delta_2}}ds
=1+{\delta_1^{-1}(1- |z_2|^2s )^{\delta_2}}\big|_{\frac{1}{2}}^1\lesssim 1.   \end{align}
 Therefore $ I_{\delta_1,\delta_2}(z)\lesssim 1$ on $U^{\alpha}$.
  	\end{proof}
  	To obtain the boundedness results for the operators on $A^2(U^{\alpha})$, we also need  Schur's lemma. See \cite{Zhu} for a proof. 
 	 	 \begin{lem}[Schur's Lemma] Let $(X,\mu)$ and $(X,\nu)$ be measure spaces, $R(x,y)$ a non-negative measurable function on $X\times X$, $1<p<\infty$ and $\frac{1}{p}+\frac{1}{q}=1$. Suppose $h$ is a positive function on $X$ that is measurable with respect to $\mu$ and $\nu$ and $C_p$ and $C_q$ are positive constants such that
 	 	 	\begin{align}\label{***}
 	 	 	&\int_{X}R(x,y)h(y)^qd\nu(y)\leq C_q h(x)^q \text{ for $\mu$-almost every x };\\
 	 	 	&\int_{X}R(x,y)h(y)^pd\nu(x)\leq C_q h(y)^p \text{ for $\nu$-almost every y }.
 	 	 	\end{align}
 	 	 	Then $Tf(x)=\int_XR(x,y)f(y)d\nu(y)$ defines a bounded operator $T:L^p(X;\nu)\mapsto L^p(X;\mu)$ with $\|T\|_{L^p(X;\nu)\mapsto L^p(X;\mu)}\leq C_q^{\frac{1}{q}}C_p^{\frac{1}{p}}$.
 	 	 \end{lem}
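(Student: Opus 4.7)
The plan is to run the classical two-step proof of Schur's test: a single H\"older split of the kernel, followed by Tonelli's theorem and the two hypothesized weighted bounds. First, for $f\in L^p(X;\nu)$, I would estimate $|Tf(x)|$ pointwise by writing
\[
|Tf(x)|\;\leq\;\int_X\bigl(R(x,y)^{1/q}h(y)\bigr)\bigl(R(x,y)^{1/p}h(y)^{-1}|f(y)|\bigr)\,d\nu(y)
\]
and applying H\"older's inequality in $y$ with conjugate exponents $q$ and $p$. Using the first hypothesis to bound the resulting $q$-factor gives
\[
|Tf(x)|^{p}\;\leq\;C_q^{p/q}\,h(x)^{p}\int_X R(x,y)\,h(y)^{-p}|f(y)|^{p}\,d\nu(y).
\]

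Second, I would integrate this inequality against $d\mu(x)$ and interchange the order of integration via Tonelli, which is legitimate because every integrand is nonnegative. The inner integral $\int_X R(x,y)\,h(x)^{p}\,d\mu(x)$ is then controlled by the second hypothesis (whose intended formulation is the $\mu$-integral against the weight $h(x)^{p}$, bounded by $C_p\,h(y)^{p}$). The factors $h(y)^{-p}$ and $h(y)^{p}$ then cancel, leaving
\[
\int_X|Tf(x)|^{p}\,d\mu(x)\;\leq\;C_q^{p/q}\,C_p\int_X|f(y)|^{p}\,d\nu(y),
\]
and taking $p$-th roots yields exactly the stated bound $\|T\|_{L^p(X;\nu)\to L^p(X;\mu)}\leq C_q^{1/q}C_p^{1/p}$.

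Conceptually, nothing here is genuinely hard; the entire proof is a direct calculation. The one point that demands attention is the H\"older split itself: the factors $R(x,y)^{1/q}$, $R(x,y)^{1/p}$, $h(y)$, $h(y)^{-1}$, and $|f(y)|$ must be engineered into the two halves so that one half matches the first hypothesis immediately, while the other half, after being raised to the $p$-th power and integrated in $x$, exposes exactly the weighted kernel integral governed by the second hypothesis. All measurability conditions and interchanges of integrals are justified by nonnegativity of the integrand, so no delicate convergence issue intervenes.
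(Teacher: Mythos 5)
Your proof is correct and is the standard Schur-test argument; the paper itself gives no proof of this lemma, deferring to the reference \cite{Zhu}, where exactly this H\"older-plus-Tonelli computation appears. You also rightly read the second hypothesis as the $\mu$-integral $\int_X R(x,y)h(x)^p\,d\mu(x)\leq C_p h(y)^p$, correcting the typographical slip in the statement.
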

 \section{A sufficient condition for the boundedness}
 In this section we give and prove a sufficient condition for the boundedness of various operators on the the Bergman space of the Thullen domain. See (\ref{1.36}) and (\ref{1.37}). These two inequalities are stronger conditions for $L^2$ boundedness. In fact, they imply the $L^p$ boundedness for a range of $p$. See the Remark after the proof of Theorem 3.1. As one will see soon, when the operator $T$ is a Toeplitz operator with bounded symbol (Corollary 3.2), a Hankel operator with bounded symbol (Corollary 3.3),  $T$ satisfies this condition. 
 
 We begin by defining two translation operators on $L^2(U^\alpha)$ using $\varphi_{z_2}$ and $\phi_{z_1}$:
 \begin{align}
 &U_{z}f(w):=f(\varphi_{z_2}(w))J\varphi_{z_2}(w);\\
 &V_{z_1}f(w):=f(\phi_{z_1}(w))J\phi_{z_1}(w).\end{align}
Here  $J$ is the holomorphic Jacobian determinant. Since $\varphi_{z_2}$ is a biholomorphism on $U^{\alpha}$, the induced $U_z$ is an isometry on $L^2(U^{\alpha})$. Since $\phi^{(1)}(w)$ is a holomorphic function and $\phi^{(2)}(w)$ is holomorphic in $w_2$, we have
 $$d\sigma(\phi_{z_1}(w))=|J\phi_{z_1}(w)|^2d\sigma(w).$$
 Therefore the induced operator $V_{z_1}$ is also an isometry on $L^2(U^{\alpha})$. 
 	 \begin{thm}
 	Let $T:A^2(U^\alpha)\rightarrow A^2(U^{\alpha})$ be a linear operator defined on the linear span of the normalized reproducing kernels of $A^2(U^{\alpha})$. Assume that there exists an operator $T^*$ defined on the same span such that the duality relation $\langle T k_z,k_w\rangle =\langle k_z,T^*k_w\rangle$ holds for all $z,w \in U^{\alpha}$. 
 	If 
 	\begin{align}\label{1.36}
 	&\sup\limits_{z\in U^{\alpha}}\|V_{f_{\alpha}(z)}U_{z}Tk_{z}(w)\|_{L^p(U^{\alpha})}<\infty;\\\label{1.37}
 		&\sup\limits_{z\in U^{\alpha}}\|V_{f_{\alpha}(z)}U_{z}T^*k_{z}(w)\|_{L^p(U^{\alpha})}<\infty,
 	\end{align}
 	for $p>4$, then $T$ can be extended to a bounded operator on $A^2(U^{\alpha})$.
 \end{thm}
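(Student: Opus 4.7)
The approach is to reduce $L^2$-boundedness of $T$ to Schur's Lemma~2.4 applied to the Berezin-type kernel $R(z,w)=|\langle Tk_z,k_w\rangle|$ on $(U^\alpha\times U^\alpha,\,d\lambda\otimes d\lambda)$. Setting $\widetilde f(z)=f(z)/\|K_z\|$, the Bergman reproducing property yields
$$\langle Tf,g\rangle=\int_{U^\alpha}\!\!\int_{U^\alpha}\widetilde f(z)\,\overline{\widetilde g(w)}\,\langle Tk_z,k_w\rangle\,d\lambda(z)\,d\lambda(w)$$
for $f,g$ in the linear span of the normalized kernels, together with $\|f\|=\|\widetilde f\|_{L^2(d\lambda)}$. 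It therefore suffices to produce a positive test function $h$ --- I would take $h(z)=K(z,\bar z)^{-s}$ for some $s>0$ to be tuned at the end --- satisfying the two Schur inequalities of Lemma~2.4 (with $p=q=2$ and both measures equal to $d\lambda$) for the kernel $R$.

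The hypotheses enter through the $L^2$-isometry of $V_{f_\alpha(z)}U_z$ recorded just above the statement of Theorem~3.1. This identity reads
$$\langle Tk_z,k_w\rangle=\bigl\langle V_{f_\alpha(z)}U_zTk_z,\;V_{f_\alpha(z)}U_zk_w\bigr\rangle_{L^2},$$
and Hölder's inequality together with hypothesis \eqref{1.36} gives
$$|\langle Tk_z,k_w\rangle|\lesssim\bigl\|V_{f_\alpha(z)}U_zk_w\bigr\|_{L^{p'}},\qquad p'=p/(p-1).$$
Because $V_{f_\alpha(z)}U_z$ acts on $k_w$ by composition with the non-holomorphic map $\Psi_z=\varphi_{z_2}\circ\phi_{f_\alpha(z)}$ (which satisfies $\Psi_z(0)=z$) multiplied by the corresponding Jacobian-type factor, substituting the off-diagonal kernel estimate \eqref{KD1}, the diagonal estimate \eqref{KD}, and the comparison of Lemma~2.1 reduces $\|V_{f_\alpha(z)}U_zk_w\|_{L^{p'}}^{p'}$ to a Forelli--Rudin-type integral in the form treated by Lemma~2.3. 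The hypothesis $p>4$, equivalently $p'<4/3$, is precisely what forces the resulting exponents to fit the admissible range $\epsilon_1<\alpha+1$, $\epsilon_2<1$, $\epsilon_3>0$, $\delta_1\ge 0$, $\delta_2>0$ of Lemma~2.3, yielding an explicit pointwise bound on $R(z,w)$.

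Inserting this bound into the left-hand side of Schur's first inequality and re-applying Lemma~2.3 with the weight $h(w)^2\,d\lambda(w)$ verifies that inequality for a suitable choice of $s$. The symmetric second Schur inequality follows by interchanging the roles of $z$ and $w$: the identity $\langle Tk_z,k_w\rangle=\overline{\langle T^*k_w,k_z\rangle}$ allows one to invoke hypothesis \eqref{1.37} on $T^*$ in place of \eqref{1.36} on $T$. The main obstacle is the estimation of $\|V_{f_\alpha(z)}U_zk_w\|_{L^{p'}}$ itself: since $\phi_{f_\alpha(z)}$ is not holomorphic, the biholomorphic transformation law for the Bergman kernel is unavailable and must be replaced by the weaker comparison in Lemma~2.1, with its correction factor $(1-|\phi^{(2)}(w)|^2)^{\alpha-1}/(1-|w_2|^2)^{\alpha-1}$ carried through the whole calculation; simultaneously matching this to the Schur test and to the constraints of Lemma~2.3 is what forces the threshold $p>4$ and is the most delicate step of the argument.
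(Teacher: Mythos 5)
Your proposal has the right skeleton --- reduce to a Schur test (Lemma 2.4) with a power of $\|K_\cdot\|$ as test function, exploit the $L^2$-isometry of $V_{f_\alpha(z)}U_z$, apply H\"older against the hypothesis \eqref{1.36}, and close with Forelli--Rudin --- but you apply H\"older's inequality in the wrong place, and this opens a genuine gap. You bound the kernel entry pointwise,
\begin{equation*}
|\langle Tk_z,k_w\rangle|=\bigl|\langle V_{f_\alpha(z)}U_zTk_z,\,V_{f_\alpha(z)}U_zk_w\rangle\bigr|\le\bigl\|V_{f_\alpha(z)}U_zTk_z\bigr\|_{L^p}\,\bigl\|V_{f_\alpha(z)}U_zk_w\bigr\|_{L^{p'}}\lesssim\bigl\|V_{f_\alpha(z)}U_zk_w\bigr\|_{L^{p'}},
\end{equation*}
and only then run the Schur test. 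The paper does the opposite: it first changes variables $w=\varphi_{z_2}(t)$, $t=\phi_{f_\alpha(z)}(w)$ so that, via the reproducing property and Lemma 2.1, the inner product $\langle T^*k_z,k_{\varphi_{z_2}(\phi_{f_\alpha(z)}(w))}\rangle$ becomes the point evaluation $V_{f_\alpha(z)}U_zT^*k_z(w)$ times explicit Jacobian and kernel-norm factors, and applies H\"older to the resulting integral over $w$, putting the single function $V_{f_\alpha(z)}U_zT^*k_z$ in $L^p$ and the explicit weight in $L^q$ with $q<4/3$, which is where Lemma 2.3 enters. Your pointwise bound pays the worst case of $|\langle g,V_{f_\alpha(z)}U_zk_w\rangle|$ over all unit vectors $g\in L^p$ separately for each $w$, discarding the fact that it is the same function $V_{f_\alpha(z)}U_zTk_z$ that is paired with the whole family $\{V_{f_\alpha(z)}U_zk_w\}_w$.

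The loss is quantitative, not cosmetic. Already in the model case of the ball one computes $\|U_zk_w\|_{L^{p'}}\simeq|\langle k_z,k_w\rangle|^{2/p'-1}$, and the Schur test for the kernel $\|K_z\|\,\|K_w\|\,|\langle k_z,k_w\rangle|^{\gamma}$ on a domain in $\mathbb C^2$ closes only for $\gamma>2/3$ (sharp: the associated integral operator is unbounded on $L^2$ for $\gamma\le 2/3$, so no choice of test function $h$ rescues it). The condition $2/p'-1>2/3$ is $p>6$, not $p>4$, and since \eqref{1.36}--\eqref{1.37} for a given $p$ do not imply the same bounds for larger $p$, your argument cannot reach the stated range $4<p\le 6$; the threshold $p>4$ in your exponent bookkeeping is the one-variable numerology. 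Two subsidiary problems: Lemma 2.3 treats only integrals with a single singular point $z$ and diagonal-type weights, so neither $\|V_{f_\alpha(z)}U_zk_w\|_{L^{p'}}$ (a genuinely two-point integral, with singularities tied to $z$ through the Jacobians of $\varphi_{z_2}\circ\phi_{f_\alpha(z)}$ and to $w$ through $k_w$) nor your final Schur integral against the off-diagonal kernel, which involves $|1-\bar z_1w_1(1-\bar z_2w_2)^{-\alpha}|$ rather than the $|1-\bar z_1w_1(1-|z_2|^2)^{-\alpha/2}|$ appearing in Lemma 2.3, is of the form that lemma covers. The paper's order of operations --- change variables first, H\"older on the outer integral second --- is precisely what reduces everything to the one-point integrals of Lemma 2.3 and yields the threshold $p>4$.
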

 \begin{proof} Since the linear span of all normalized reproducing kernels is dense in $A^2(U^{\alpha})$ it suffices to show that $\|Tf\|\leq f$ for all $f$ that are in the linear span of the reproducing kernels. Note that for any such $f$ we have 
 	\begin{align}
 	\|Tf\|^2&=\int_{U^{\alpha}}|\langle Tf,K_z\rangle|^2d\sigma(z)=\int_{U^{\alpha}}|\langle f,T^*K_z\rangle|^2d\sigma(z)\nonumber\\&\leq \int_{U^{\alpha}}\left|\int_{U^{\alpha}}\langle f,K_w\rangle\langle K_w,T^*K_z\rangle d\sigma(w)\right|^2d\sigma(z)
 	\\&\leq \int_{U^{\alpha}}\left|\int_{U^{\alpha}}\langle K_w,T^*K_z\rangle |f(w)|d\sigma(w)\right|^2d\sigma(z).
 	\end{align}
 	Set $Rf(z):=\int_{U^{\alpha}}|\langle K_w,T^*K_z\rangle |f(w)d\sigma(w)$. Then the $L^2$ regularity of $R$ will imply the $L^2$ regularity of $T$.
 	By Lemma 2.4, we need to prove that there exists an $\epsilon>0$ such that:
 	\begin{align}\label{1.41}
 &\int_{U^{\alpha}}\left| \langle T^*K_z,K_w\rangle \right| \left\| K_w \right\|^\epsilon d\sigma(w)\lesssim \|K_z\|^{\epsilon},\\
 \label{1.42}&\int_{U^{\alpha}}\left| \langle TK_z,K_w\rangle \right| \left\| K_w \right\|^{\epsilon} d\sigma(w)\lesssim \|K_z\|^{\epsilon}.
 	\end{align}
 	Here we give the proof for inequality (\ref{1.41}). Inequality (\ref{1.42}) follows by the same argument.  Recall that $d\lambda(w)=\|K_w\|^2d\sigma(w)$. Then
\begin{align}\label{1.43}
&\int_{U^{\alpha}}\left| \langle T^*K_z,K_w\rangle \right| \left\| K_w \right\|^\epsilon d\sigma(w)
=\left\| K_z\right\| \int_{U^{\alpha}} \left| \langle T^*k_z,k_w\rangle \right| \left\| K_w \right\|^{\epsilon-1}d\lambda(w).
\end{align}
Substituting $w=\varphi_{z_2}(t)$ into (\ref{1.43}) yields
\begin{align}\label{1.44}
\left\| K_z\right\| \int_{U^{\alpha}} \left| \langle T^*k_z,k_{\varphi_{z_2}(t)}\rangle \right| \left\| K_{\varphi_{z_2}(t)} \right\|^{\epsilon-1}d\lambda(\varphi_{z_2}(t)).
\end{align}
By (\ref{trans}), we have $d\lambda(\varphi_{z_2}(t))=d\lambda(t)$ and $\|K_{\varphi_{z_2}(t)}\||J\varphi_{z_2}(t)|=\|K_t\|$. Therefore 
\begin{align}\label{1.45}
&\left\| K_z\right\| \int_{U^{\alpha}} \left| \langle T^*k_z,k_{\varphi_{z_2}(t)}\rangle \right| \left\| K_{\varphi_{z_2}(t)} \right\|^{\epsilon-1}d\lambda(\varphi_{z_2}(t))\nonumber\\
=&\left\| K_z\right\| \int_{U^{\alpha}} \frac{\left|  T^*k_z(\varphi_{z_2}(t))\right|}{\|K_{\varphi_{z_2}(t)}\|} \left\| K_{\varphi_{z_2}(t)} \right\|^{\epsilon-1}d\lambda(t)\nonumber\\
=&\left\| K_z\right\| \int_{U^{\alpha}} \frac{\left|  T^*k_z(\varphi_{z_2}(t))J\varphi_{z_2}(t)\right|}{\|K_{t}\|} \left\| K_{\varphi_{z_2}(t)} \right\|^{\epsilon-1}d\lambda(t).
\end{align}
Recall $f_{\alpha}(z)=z_1/(1-|z_2|^2)^{\alpha/2}$. Substituting $t=\phi_{f_{\alpha}(z)}(w)$ into the integral above then gives
\begin{align}\label{1.46}
&\left\| K_z\right\| \int_{U^{\alpha}} \frac{\left|  T^*k_z(\varphi_{f_{\alpha}(z)}(\phi_{z_1}(w)))J\varphi_{z_2}(\phi_{z_1}(w))\right|}{\|K_{\phi_{f_{\alpha}(z)}(w)}\|} \left\| K_{\varphi_{z_2}(\phi_{z_1}(w))} \right\|^{\epsilon-1}d\lambda(\phi_{f_{\alpha}(z)}(w)).
\end{align}
By Lemma 2.1, we have 
\begin{align}
&d\lambda(\phi_{f_{\alpha}(z)}(w))=\frac{(1-|w_2|^2)^{\alpha-1}}{(1-|\phi^{(2)}(w)|^2)^{\alpha-1}}d\lambda(w);\\
&\|K_w\|=\|K_{\phi_{f_{\alpha}(z)}(w)}\||J\phi_{f_{\alpha}(z)}(w)|\frac{(1-|\phi^{(2)}(w)|^2)^{\frac{\alpha-1}{2}}}{(1-|w_2|^2)^{\frac{\alpha-1}{2}}}.
\end{align}
Thus (\ref{1.46}) becomes
\begin{align}\label{1.49}
&\left\| K_z\right\| \int_{U^{\alpha}} \frac{\left|  T^*k_z(\varphi_{z_2}(\phi_{f_{\alpha}(z)}(w)))J\varphi_{z_2}(\phi_{f_{\alpha}(z)}(w))J\phi_{z_1}(w)\right|(1-|w_2|^2)^{\frac{\alpha-1}{2}}\left\| K_{\varphi_{z_2}(\phi_{f_{\alpha}(z)}(w))} \right\|^{\epsilon-1}}{\|K_{w}\|(1-|\phi^{(2)}(w)|^2)^{\frac{\alpha-1}{2}}} d\lambda(w).
\end{align}
Recall that  $U_{z}f(w)=f(\varphi_{z_2}(w))J\varphi_{z_2}(w)$ and  $V_{z_1}f(w)=f(\phi_{z_1}(w))J\phi_{z_1}(w)$. Then (\ref{1.49}) can be expressed as follows:
\begin{align}\label{1.50}
&\left\| K_z\right\|^{\epsilon} \int_{U^{\alpha}} \frac{\left|  V_{f_\alpha(z)}U_{z}T^*k_z(w)\right|(1-|w_2|^2)^{\frac{\alpha-1}{2}}\left\| K_{\varphi_{z_2}(\phi_{f_{\alpha}(z)}(w))} \right\|^{\epsilon-1}\|K_z\|^{1-\epsilon}}{(1-|\phi^{(2)}(w)|^2)^{\frac{\alpha-1}{2}}\|K_{w}\|} d\lambda(w).
\end{align}
Applying H\"older's inequality to (\ref{1.50}) yields:
\begin{align}
&\left\| K_z\right\|^{\epsilon} \int_{U^{\alpha}} \frac{\left|  V_{f_\alpha(z)}U_{z}T^*k_z(w)\right|(1-|w_2|^2)^{\frac{\alpha-1}{2}}\left\| K_{\varphi_{z_2}(\phi_{f_{\alpha}(z)}(w))} \right\|^{\epsilon-1}\|K_z\|^{1-\epsilon}}{(1-|\phi^{(2)}(w)|^2)^{\frac{\alpha-1}{2}}\|K_{w}\|} d\lambda(w)\nonumber\\\label{1.51}
\leq&\left\| K_z\right\|^{\epsilon}\left( \int_{U^{\alpha}} \frac{(1-|w_2|^2)^{\frac{q(\alpha-1)}{2}}\left\| K_{\varphi_{z_2}(\phi_{f_{\alpha}(z)}(w))} \right\|^{q(\epsilon-1)}\|K_z\|^{q(1-\epsilon)}\|K_w\|^q}{(1-|\phi^{(2)}(w)|^2)^{\frac{q(\alpha-1)}{2}}} d\sigma(w)\right)^{\frac{1}{q}}\\
&\times\left(\int_{U^{\alpha}} |V_{f_{\alpha}(z)}U_{z}T^*k_z(w)|^pd\sigma(w)\right)^\frac{1}{p}.
\end{align}
We claim by choosing appropriate $p$ and $\epsilon$, the integral in (\ref{1.51}) is bounded as a function of $z$ on $ U^{\alpha}$. Substituting $t=\phi_{f_\alpha(z)}(w)$ into (\ref{1.51}) gives
\begin{align}
 &\int_{U^{\alpha}} \frac{(1-|w_2|^2)^{\frac{q(\alpha-1)}{2}}\left\| K_{\varphi_{z_2}(\phi_{f_{\alpha}(z)}(w))} \right\|^{q(\epsilon-1)}\|K_z\|^{q(1-\epsilon)}\|K_w\|^q}{(1-|\phi^{(2)}(w)|^2)^{\frac{q(\alpha-1)}{2}}} d\sigma(w)\nonumber\\
=&\int_{U^{\alpha}} \frac{(1-|\phi^{(2)}(t)|^2)^{\frac{q(\alpha-1)}{2}}\left\| K_{\varphi_{z_2}(t)} \right\|^{q(\epsilon-1)}\|K_z\|^{q(1-\epsilon)}\|K_{\phi_{f_{\alpha}(z)}(t)}\|^q|J\phi_{f_{\alpha}(z)}(t)|^2}{(1-|t|^2)^{\frac{q(\alpha-1)}{2}}}d\sigma(t)\nonumber\\
=&\int_{U^{\alpha}} \left\| K_{t} \right\|^{q(\epsilon-1)}|J\varphi_{z_2}(t)|^{q(1-\epsilon)}\|K_z\|^{q(1-\epsilon)}\|K_{t}\|^q|J\phi_{f_{\alpha}(z)}(t)|^{2-q}d\sigma(t)\nonumber\\\simeq&
\int_{U^{\alpha}}\frac{(1-|t_2|^2)^{\frac{-q\epsilon(2+\alpha)}{2}}(1-\frac{|t_1|^2}{(1-|t_2|^2)^{\alpha}})^{\frac{-3q\epsilon}{2}}(1-\frac{|z_1|^2}{(1-|z_2|^2)^{\alpha}})^{\frac{3(2-q)}{2}+\frac{3q(\epsilon-1)}{2}}}{|1-\bar z_2 t_2|^{(2+\alpha)q(1-\epsilon)}\left|1-\frac{\bar z_1 t_1}{(1-|z_2|^2)^{\frac{\alpha}{2}}}\right|^{3(2-q)}}d\sigma(t).
\end{align}
By Lemma 2.3, the integral above is bounded for $z\in U^{\alpha}$ if the following inequalities hold:
\begin{align}\label{1.54}
&\frac{3q\epsilon}{2}<1;\\\label{1.55}
&\frac{q\epsilon(2+\alpha)}{2}<1+\alpha;\\\label{1.56}
&\frac{3(2-q)}{2}+\frac{3q(\epsilon-1)}{2}>0;\\\label{1.57}
&(2+\alpha)q(1-\epsilon)+\frac{q\epsilon(2+\alpha)}{2}<2+\alpha;\\\label{1.58}
&3(2-q)+\frac{3q\epsilon}{2}-\frac{3(2-q)}{2}-\frac{3q(\epsilon-1)}{2}\leq 3.
\end{align}
The last inequality is trivial. Since $\alpha>0$, inequality (\ref{1.54}) implies (\ref{1.55}). Both (\ref{1.56}) and (\ref{1.57}) are equivalent to the $\epsilon>2-\frac{2}{q}$. Thus we have
$
2-\frac{2}{q}<\epsilon<\frac{2}{3q}.
$
Note that $2-\frac{2}{q}<\frac{2}{3q}$ when $q<\frac{4}{3}$. Hence for $q<\frac{4}{3}$, or equivalently for $p>4$, an $\epsilon$ can be chosen from $(2-\frac{2}{q},\frac{2}{3q})$. Therefore for $z\in U^{\alpha}$ the following integral is bounded:
\begin{align}
\int_{U^{\alpha}} \frac{(1-|w_2|^2)^{\frac{q(\alpha-1)}{2}}\left\| K_{\varphi_{z_2}(\phi_{f_{\alpha}(z)}(w))} \right\|^{q(\epsilon-1)}\|K_z\|^{q(1-\epsilon)}\|K_w\|^q}{(1-|\phi^{(2)}(w)|^2)^{\frac{q(\alpha-1)}{2}}} d\sigma(w)\lesssim 1.
\end{align}
For such  $p$ and $\epsilon$, we have 
\begin{align}\label{3.24}
\int_{U^{\alpha}}|\langle T^*K_z,K_w\rangle|\|K_w\|^\epsilon d\sigma(w)\lesssim \sup\limits_{z\in U^{\alpha}}\|V_{f_{\alpha}(z)}U_{z}T^*k_{z}(w)\|_{L^p(U^{\alpha})}\|K_z\|^{\epsilon}.
\end{align}
Similarly we obtain
\begin{align}\label{3.25}
\int_{U^{\alpha}}|\langle TK_z,K_w\rangle|\|K_w\|^\epsilon d\sigma(w)\lesssim \sup\limits_{z\in U^{\alpha}}\|V_{f_{\alpha}(z)}U_{z}Tk_{z}(w)\|_{L^p(U^{\alpha})}\|K_z\|^{\epsilon}.
\end{align}
Lemma 2.4 then implies that $T$ can be extended to a bounded operator on $A^2(U^{\alpha})$ if
\begin{align}
&\sup\limits_{z\in U^{\alpha}}\|V_{f_{\alpha}(z)}U_{z}Tk_{z}(w)\|_{L^p(U^{\alpha})}<\infty,\\
&\sup\limits_{z\in U^{\alpha}}\|V_{f_{\alpha}(z)}U_{z}T^*k_{z}(w)\|_{L^p(U^{\alpha})}<\infty.
\end{align}
\end{proof}
\paragraph{{\bf Remark}} It is worth noting that  when $p>4$, the inequalities (\ref{3.24}) and (\ref{3.25}) hold for all $\epsilon\in (\frac{2}{p},\frac{2}{3}-\frac{2}{3p})$. Using a variant of Schur's lemma in \cite{EM}, one can extend $T$ to a bounded operator on $A^{p^\prime}(U^{\alpha})$ for $p^\prime\in (\frac{p+2}{p-1},\frac{p+2}{3})$. Here we focus only on the $L^2$-boundedness of $T$. 
\vskip 10pt
In the case when $T$ in the above theorem is a Toeplitz operator or a Hankel operator, the conditions (\ref{1.36}) and (\ref{1.37}) have simpler forms. For the Toeplitz operator, we have the following corollary:
\begin{co}
Let $T_u$ be a Toeplitz operator whose symbol $u$ satisfies
\begin{align}
\sup_{z\in U^{\alpha}}\|u(\varphi_{z_2}(\phi_{f_{\alpha}(z)}(w))\|_{L^p(U^{\alpha})}<\infty,
\end{align}
	for $p>4$. Then $T_u$ is $L^2$-bounded.
\end{co}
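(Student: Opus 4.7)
The plan is to deduce the corollary from Theorem~3.1 by verifying the two hypotheses (\ref{1.36}) and (\ref{1.37}) for $T=T_u$. On the linear span of normalized reproducing kernels the formal adjoint is $T_{\bar u}$, since $\langle T_u k_z,k_w\rangle=\langle uk_z,k_w\rangle=\langle k_z,\bar uk_w\rangle=\langle k_z,T_{\bar u}k_w\rangle$. Because the $L^p$ condition $\sup_z\|u\circ\varphi_{z_2}\circ\phi_{f_\alpha(z)}\|_{L^p}<\infty$ is invariant under complex conjugation of $u$, the same hypothesis holds with $\bar u$ in place of $u$. It therefore suffices to verify (\ref{1.36}); (\ref{1.37}) will follow by the identical argument applied to $\bar u$.

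Next I would exploit that $\varphi_{z_2}$ is a biholomorphic automorphism so that $U_z$ is a unitary on $A^2(U^\alpha)$ that commutes with $P$. A direct computation yields the conjugation identity $U_zM_uU_z=M_{u\circ\varphi_{z_2}}$, and the Bergman kernel transformation rule combined with $\varphi_{z_2}(z)=(f_\alpha(z),0)$ produces $U_zk_z=c\cdot k_{(f_\alpha(z),0)}$ for some unimodular $c$. Stringing these together gives
\[U_zT_uk_z=U_zPM_uk_z=PU_zM_uk_z=PM_{u\circ\varphi_{z_2}}U_zk_z=c\,P\bigl((u\circ\varphi_{z_2})\cdot k_{(f_\alpha(z),0)}\bigr).\]
Applying $V_{f_\alpha(z)}$ and writing the projection as an integral, I would then change variables via $\xi=\phi_{f_\alpha(z)}(\eta)$, using $d\sigma(\phi_{f_\alpha(z)}(\eta))=|J\phi_{f_\alpha(z)}(\eta)|^2\,d\sigma(\eta)$. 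This substitution forces the composed symbol $\tilde u(\eta):=u(\varphi_{z_2}(\phi_{f_\alpha(z)}(\eta)))$ to appear explicitly inside the integral, with the remaining factors built from $k_{(f_\alpha(z),0)}\circ\phi_{f_\alpha(z)}$, the Bergman kernel $K_{\phi_{f_\alpha(z)}(w)}\circ\phi_{f_\alpha(z)}$, and the Jacobian of $\phi_{f_\alpha(z)}$.

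I would then apply H\"older's inequality in $\eta$ with exponents $p$ and $p/(p-1)$, pulling the factor $\|\tilde u\|_{L^p}$ out of the integrand (finite uniformly in $z$ by hypothesis) and leaving an $L^{p/(p-1)}$ integral of the kernel-Jacobian combination. Raising to the $p$th power and integrating in $w$ produces $\|V_{f_\alpha(z)}U_zT_uk_z\|_{L^p}^p$, and the task reduces to showing that the remaining $w$-integral is bounded uniformly in $z$. Here I would first invoke Lemma~2.1 to convert the Jacobian-kernel combination under the non-biholomorphic map $\phi_{f_\alpha(z)}$ into a comparable expression of the form $K_{U^\alpha}(\cdot;\,\overline{\cdot})$ multiplied by a $(1-|\phi^{(2)}|^2)^{\alpha-1}/(1-|w_2|^2)^{\alpha-1}$ weight, and then apply the Forelli-Rudin-type estimate of Lemma~2.3 to verify uniform boundedness in $z$.

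The main obstacle will be the exponent bookkeeping of this final step. As in the computation leading to (\ref{1.54})--(\ref{1.58}) in the proof of Theorem~3.1, Lemma~2.3 demands a system of strict inequalities on the exponents that emerge after all substitutions and powers are collected, and it is precisely the simultaneous solvability of that system which forces the restriction $p>4$. Additional care is needed because $\phi_{f_\alpha(z)}$ is not biholomorphic, so the Bergman-kernel transformation rule only holds up to the weighted comparison of Lemma~2.1; the off-diagonal analogue must be tracked together with its weight factor so that the final exponents fed into Lemma~2.3 match the admissible ranges $\epsilon_1<\alpha+1$, $\epsilon_2<1$, $\epsilon_3>0$ of that lemma.
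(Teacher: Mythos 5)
Your reduction to Theorem 3.1 via $T_u^*=T_{\bar u}$ and the identity $U_zT_uk_z=c\,P\bigl((u\circ\varphi_{z_2})\,k_{(f_\alpha(z),0)}\bigr)$ is fine as far as it goes, and you have correctly isolated the key fact $U_zk_z=c\,k_{(f_\alpha(z),0)}$ (which, after applying $V_{f_\alpha(z)}$, is exactly the paper's computation $|V_{f_\alpha(z)}U_zk_z(w)|\simeq 1$). But the final step of your plan fails quantitatively. You propose to bound $|V_{f_\alpha(z)}U_zT_uk_z(w)|$ pointwise by H\"older in $\eta$, pulling out $\|\tilde u\|_{L^p}$ and leaving the $L^q$-norm ($q=p/(p-1)$) of the kernel, and then to integrate the $p$-th power in $w$. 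Already in the untransformed model case $z=0$ this gives $\|T_uk_0\|_{L^p}^p\lesssim\|u\|_{L^p}^p\int_{U^\alpha}\bigl(\int_{U^\alpha}|K(w;\bar\eta)|^{q}\,d\sigma(\eta)\bigr)^{p/q}d\sigma(w)$, and the $w$-integral diverges: by the Forelli--Rudin estimates the inner integral blows up at a rate whose $p/q$-th power is never integrable for $q=p/(p-1)$ (the requisite inequality reduces to $p<0$). Pointwise H\"older is simply too lossy to prove $L^p$-type bounds for the Bergman projection; one must run a Schur test with a weight. A second, independent obstruction is that your change of variables produces the off-diagonal kernel $K(\phi_{f_\alpha(z)}(w);\overline{\phi_{f_\alpha(z)}(\eta)})$ with \emph{both} arguments moved by the non-holomorphic map $\phi_{f_\alpha(z)}$; Lemma 2.1 is a purely diagonal comparison and the paper has no off-diagonal analogue, nor does the resulting double integral match the form of Lemma 2.3 (whose kernel is anchored at the fixed point $z$). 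So the step "invoke Lemma 2.1, then Lemma 2.3" does not apply to the object you would actually need to estimate.

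The paper sidesteps both problems by never computing $V_{f_\alpha(z)}U_zT_uk_z$ at all: it absorbs the symbol into the Schur kernel, i.e.\ it verifies $\int_{U^\alpha}|u(w)|\,|\langle K_w,K_z\rangle|\,\|K_w\|^{\epsilon}\,d\sigma(w)\lesssim\|K_z\|^{\epsilon}$ directly. The two changes of variables from the proof of Theorem 3.1 are repeated verbatim, the companion $q$-integral is the one already shown to be uniformly bounded there via Lemma 2.3, and the only new ingredient is the explicit verification (1.69) that $|V_{f_\alpha(z)}U_zk_z(w)|\simeq 1$, which lets H\"older peel off exactly $\|u\circ\varphi_{z_2}\circ\phi_{f_\alpha(z)}\|_{L^p}$. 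If you want to salvage your outline, replace the pointwise H\"older step by this weighted Schur test; the conjugation identities you derived then become the computation of $|V_{f_\alpha(z)}U_zk_z|$ rather than of $V_{f_\alpha(z)}U_zT_uk_z$.
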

\begin{proof} Recall the Bergman projection $P$ on $U^{\alpha}$.
Notice first that for $g\in L^2(U^{\alpha})$,
\begin{align}
T_ug(z)=P(ug)(z)=\int_{U^{\alpha}}u(w)g(w)\langle K_w,K_z\rangle d\sigma(w).
\end{align}
Therefore, it is enough to show that
\begin{align}\label{1.64}
\int_{U^{\alpha}}|u(w)||\langle K_w,K_z\rangle| \|K_w\|^{\epsilon}d\sigma(w)\lesssim \|K_z\|^{\epsilon}.
\end{align}
By the same argument as in the proof of Theorem 3.1, the integral on the left hand side above was controlled from above by
\begin{align}
\left\| K_z\right\|^{\epsilon} \left(\int_{U^{\alpha}} \left|  V_{f_\alpha(z)}U_{z}k_z(w)u(\varphi_{z_2}(\phi_{f_{\alpha}(z)}(w))\right|^pd\lambda(w)\right)^{\frac{1}{p}},
\end{align}
for $p>4$. We claim $|V_{f_{\alpha}(z)}U_{z}k_z(w)|\lesssim 1$. Then 
\begin{align}
\int_{U^{\alpha}}|u(w)\langle K_w,K_z\rangle| \|K_w\|^{\epsilon}d\sigma(w)\lesssim \|K_z\|^{\epsilon}\sup_{z\in U^{\alpha}}\|u(\varphi_{z_2}(\phi_{f_{\alpha}(z)}(w))\|_{L^p(U^{\alpha})},
\end{align}
and by Lemma 2.4 the proof is complete. The biholomorphic transformation formula gives 
\begin{align}
U_{z}k_z(w)&=K(\varphi_{z_2}(w);\bar z)\|K_z\|^{-1}J\varphi_{z_2}(w)\nonumber\\
&=\frac{K(w;\overline {\varphi_{z_2}(z)})\overline {J\varphi_{z_2}(z)}}{\|K_z\|}\nonumber\\&=\frac{K\left(w;\frac{\bar z_1}{(1-|z_2|^2)^{\frac{\alpha}{2}}},0\right)\overline {J\varphi_{z_2}(z)}}{\|K_{\varphi_{z_2}(z)}\||J\varphi_{z_2}(z)|}
\nonumber\\&=\frac{\left(1-\frac{|z_1|^2}{(1-|z_2|^2)^{\alpha}}\right)^{\frac{3}{2}}\overline {J\varphi_{z_2}(z)}}{\left(1-\frac{\bar z_1 w_1}{(1-|z_2|^2)^{\frac{\alpha}{2}}}\right)^3|J\varphi_{z_2}(z)|}.
\end{align}
Let $\phi^{(1)}(w)$ denote the first coordinate of $\phi_{f_{\alpha}(z)}(w)$. Note that $\phi^{(1)}(w)$ is the M\"obius map of $w_1$ that sends the origin to $f_{\alpha}(z)$. Hence 
\begin{align}
1-\frac{\bar z_1\phi^{(1)}(w)}{(1-|z_2|^2)^{\frac{\alpha}{2}}}=\frac{1-\frac{|z_1|^2}{(1-|z_2|^2)^{\alpha}}}{1-\frac{\bar z_1 w_1}{(1-|z_2|^2)^{\frac{\alpha}{2}}}}.
\end{align}
Therefore 
\begin{align}\label{1.69}
|V_{f_{\alpha}(z)}U_{z}k_z(w)|&=\frac{\left(1-\frac{|z_1|^2}{(1-|z_2|^2)^{\alpha}}\right)^{\frac{3}{2}}}{\left|1-\frac{\bar z_1 \phi^{(1)}(w)}{(1-|z_2|^2)^{\frac{\alpha}{2}}}\right|^3}|J\phi_{f_\alpha(z)}(w)|\nonumber\\&\simeq\frac{\left|1-\frac{\bar z_1 w_1}{(1-|z_2|^2)^{\frac{\alpha}{2}}}\right|^3}{\left(1-\frac{|z_1|^2}{(1-|z_2|^2)^{{\alpha}}}\right)^\frac{3}{2}}\frac{\left(1-\frac{|z_1|^2}{(1-|z_2|^2)^{{\alpha}}}\right)^\frac{3}{2}}{\left|1-\frac{\bar z_1 w_1}{(1-|z_2|^2)^{\frac{\alpha}{2}}}\right|^3}=1.
\end{align}
\end{proof}
Similarly, we treat the case of Hankel operators. Hankel operator $H_u:A^2(U^{\alpha})\rightarrow L^2(U^{\alpha})$ with symbol $u$ is defined by $H_uf=(I-P)M_uf$, where $P$ is the Bergman projection. A similar argument gives us the following result about $H_a$.
\begin{co}
	Let $H_u$ be a Hankel operator whose symbol $u$ satisfies
\begin{align}
\sup_{z\in U^{\alpha}}\|u(z)-u(\varphi_{z_2}(\phi_{f_{\alpha}(z)}(w)))\|_{L^p(U^{\alpha})}<\infty,
\end{align}
		for $p>4$ then $H_u$ is $L^2$-bounded.
\end{co}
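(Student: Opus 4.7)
The plan is to adapt the proof of Corollary 3.2 almost verbatim, after a single preliminary observation that rewrites $H_u$ as an integral operator of the same type as the one handled there. Using the reproducing property on $A^2(U^{\alpha})$, for $f$ in the (dense) linear span of the normalized reproducing kernels we have $f(z) = \int f(w) K(z;\bar w)\,d\sigma(w)$, so
\[
H_u f(z) = u(z)f(z) - P(uf)(z) = \int_{U^{\alpha}} [u(z)-u(w)]\,f(w)\,K(z;\bar w)\,d\sigma(w).
\]
Taking absolute values pointwise yields $|H_u f(z)| \leq S|f|(z)$, where $S$ is the positive integral operator on $L^2(U^{\alpha})$ with (symmetric) kernel $R(z,w)=|u(z)-u(w)|\,|K(z;\bar w)|$. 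It therefore suffices to prove that $S$ is bounded on $L^2(U^{\alpha})$.

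I would apply Schur's lemma (Lemma 2.4) with $p=q=2$ and test function $h(\zeta)=\|K_\zeta\|^{\epsilon/2}$ for a suitable $\epsilon>0$. Since $R$ is symmetric in its two variables, the two Schur inequalities coincide, and it suffices to establish
\[
\int_{U^{\alpha}} |u(z)-u(w)|\,|\langle K_w,K_z\rangle|\,\|K_w\|^{\epsilon}\,d\sigma(w) \lesssim \|K_z\|^{\epsilon}
\]
uniformly for $z\in U^{\alpha}$. Writing $d\lambda(w)=\|K_w\|^2\,d\sigma(w)$ and successively substituting $w = \varphi_{z_2}(t)$ and then $t = \phi_{f_{\alpha}(z)}(w')$, together with Lemma 2.1 to handle the non-holomorphic Jacobian of $\phi_{f_{\alpha}(z)}$, transforms the integrand exactly as in the derivation carried out in the proof of Corollary 3.2 — the only difference being that the factor $|u(w)|$ that appeared there is now replaced by $|u(z)-u(\varphi_{z_2}(\phi_{f_{\alpha}(z)}(w)))|$.

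H\"older's inequality with conjugate exponents $p>4$ and $p'=p/(p-1)$ then separates the symbol piece from the remaining weight. The symbol factor
\[
\left(\int_{U^{\alpha}} |u(z)-u(\varphi_{z_2}(\phi_{f_{\alpha}(z)}(w)))|^{p}\,d\sigma(w)\right)^{1/p}
\]
is bounded uniformly in $z$ by the hypothesis. For the remaining factor, the identity (1.69) of Corollary 3.2 gives $|V_{f_{\alpha}(z)}U_{z}k_z(w)|\simeq 1$, which reduces the problem to a Forelli--Rudin type integral already estimated via Lemma 2.3 in the proof of Theorem 3.1; it is finite whenever an admissible $\epsilon$ can be chosen in the non-empty interval $(2-2/p',\,2/(3p'))$, which is precisely the case for $p>4$. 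Together these estimates give the Schur bound and hence $\|H_u f\|_{L^2}\lesssim \|f\|_{L^2}$.

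The main obstacle is purely bookkeeping: the symbol difference $u(z)-u(w)$ must be carried through both changes of variable and then peeled off cleanly by H\"older so as to match the hypothesis. Because the hypothesis is stated in exactly the form in which the pulled-back symbol appears, and because symmetry of $R$ removes the need to verify a second Schur inequality, no new analytic input beyond what has been developed in Theorem 3.1 and Corollary 3.2 is required.
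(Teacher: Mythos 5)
Your proposal is correct and is essentially the argument the paper intends: the paper gives no details for this corollary beyond ``a similar argument,'' and the natural filling-in is exactly your rewriting $H_uf(z)=\int_{U^{\alpha}}[u(z)-u(w)]f(w)K(z;\bar w)\,d\sigma(w)$ followed by the Schur test with $h=\|K_\zeta\|^{\epsilon/2}$, the two changes of variables, and H\"older as in Corollary 3.2. Your observation that the dominating kernel $|u(z)-u(w)|\,|K(z;\bar w)|$ is symmetric, so only one Schur inequality needs checking, is a small but genuine simplification over the Toeplitz case.
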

\section{A geometric decomposition of $U^{\alpha}$}
A geometric decomposition of $U^{\alpha}$ plays an important role in the proof the compactness theorem. 
Our decomposition result uses the Skwarczy\'nski distance \cite{Skwarczynski}. We recall its definition here.

Let $\Omega$ be a bounded domain in $\mathbb C^n$ and let $K_{\Omega}$ be the Bergman kernel on $\Omega$.  Then the Skwarczy\'nski distance  $d(\cdot,\cdot)$ on  $\Omega$ is defined by 
$$d(z,w)=(1-|\langle k_z,k_w\rangle|)^{\frac{1}{2}}=\left(1-\frac{|K_{\Omega}(z;\bar w)|}{K_{\Omega}(z;\bar z)^{\frac{1}{2}}K_{\Omega}(w;\bar w)^{\frac{1}{2}}}\right)^{\frac{1}{2}}.$$
By its definition $0\leq d(z,w)\leq 1$. On the Thullen domain $U^\alpha$,
the kernel function $K_{U^\alpha}$ does not vanish on $U^{\alpha}$, and $K_{U^{\alpha}}(z;\bar z)^{-1}=0$ only when $z\in \mathbf bU^{\alpha}$. Hence the the Skwarczy\'nski distance  $d(\cdot,\cdot)$ on $U^\alpha$ satisfies the following: for $w\in U^{\alpha}$, the distance $d(z,w)=1$ if and only if $z$ is a boundary point of $U^{\alpha}$. 

Our decomposition result for $U^\alpha$ are as follows:
\begin{prop}
	The metric space $(U^{\alpha},d)$ satisfies the following property. For $r$ that is sufficiently close to 1, there exists an integer $N(r)$ and a constant $C(r)$ such that  there is  covering $\mathcal F_r=\{F_j\}$ of $U^{\alpha}$ by disjoint Borel sets satisfying
	\begin{enumerate}
		\item every point of $\Omega$ belongs to at most $N(r)$ of sets $G_j:=\{z\in U^{\alpha}:d(z,F_j)\leq r\}$.
		\item $\sup\operatorname{diam}_d F_j<C(r)$ for every $j$.
	\end{enumerate}
\end{prop}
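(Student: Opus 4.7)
My plan is to build the partition by the standard Vitali-type Voronoi construction on a maximal Skwarczy\'nski-separated net in $U^{\alpha}$, and to derive the bounded-overlap property (1) from a uniform volume comparison for Skwarczy\'nski balls with respect to the weighted measure $d\lambda(w)=\|K_w\|^2\,d\sigma(w)$.

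Fix $r\in(0,1)$. By Zorn's lemma I would choose a maximal subset $\{z_j\}\subset U^{\alpha}$ that is $r$-separated in $d$, so $d(z_i,z_j)\ge r$ whenever $i\ne j$; maximality forces the balls $B_d(z_j,r)=\{w:d(w,z_j)<r\}$ to cover $U^{\alpha}$. I then define
\[F_j=\bigl\{w\in B_d(z_j,r):d(w,z_j)\le d(w,z_k)\text{ for all }k\bigr\}\setminus\bigcup_{k<j}F_k,\]
a Voronoi-type cell with lexicographic tie-breaking. These are disjoint Borel sets covering $U^{\alpha}$, and since $F_j\subseteq B_d(z_j,r)$ the triangle inequality for $d$ gives $\operatorname{diam}_d F_j\le 2r$, establishing (2) with $C(r)=2r$. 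A simple triangle inequality also yields $B_d(z_j,r/2)\subseteq F_j$: if $d(w,z_j)<r/2$ then $d(w,z_k)\ge d(z_j,z_k)-d(w,z_j)>r/2>d(w,z_j)$ for every $k\ne j$, so $w$ strictly lies in the Voronoi cell of $z_j$.

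For (1), suppose $z\in G_j$, so some $y\in F_j$ has $d(z,y)\le r$. Then $F_j\subseteq B_d(z,r+\operatorname{diam}_d F_j)\subseteq B_d(z,3r)$, and since the $F_j$'s are disjoint with $B_d(z_j,r/2)\subseteq F_j$,
\[\#\{j:z\in G_j\}\cdot\inf_{w\in U^{\alpha}}\lambda(B_d(w,r/2))\le\sum_{j:z\in G_j}\lambda(F_j)\le\lambda(B_d(z,3r)).\]
Thus (1) reduces to the two-sided volume estimate $c(\rho)\le\lambda(B_d(w,\rho))\le C(\rho)$, uniform in $w\in U^{\alpha}$ for each $\rho\in(0,1)$.

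This uniform volume bound is the main obstacle and is where the mappings $\varphi_{z_2}$ and $\phi_{f_\alpha(z)}$ of Section~2 become essential. The composition $T_w:=\varphi_{w_2}\circ\phi_{f_\alpha(w)}$ sends $0$ to $w$, so I would compare $\lambda(B_d(w,\rho))$ with $\lambda(B_d(0,\rho))$ by pushing the ball at $w$ back to the origin. The biholomorphic piece $\varphi_{w_2}$ preserves both $d$ and $d\lambda$ exactly, via the transformation law (\ref{trans}) and the identity $|K(\varphi(z);\overline{\varphi(w)})|=|K(z;\bar w)|/(|J\varphi(z)||J\varphi(w)|)$. For the non-biholomorphic piece $\phi_{f_\alpha(w)}$, Lemma~2.1 provides an explicit correction factor between $d\lambda\circ\phi_{f_\alpha(w)}$ and $d\lambda$; the explicit formula (\ref{phi}) together with the kernel estimates (\ref{KD})--(\ref{KD1}) then yield uniform upper and lower bounds on this factor on any fixed Skwarczy\'nski ball around the origin, and similarly show that $\phi_{f_\alpha(w)}$ distorts $d$ by only a controlled factor on such balls. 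Combining these comparisons yields the desired uniform volume bound, hence (1) with $N(r)=\lceil C(3r)/c(r/2)\rceil$; the hypothesis that $r$ be close to $1$ plays no role in the construction itself but will ensure that $G_j$ is a sufficiently large enlargement of $F_j$ for the applications in Sections~4--5.
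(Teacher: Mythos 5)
Your construction (maximal $r$-separated net plus Voronoi cells) is a reasonable substitute for the decomposition the paper imports from Lemma 4.3, and your reduction of the bounded-overlap property to a two-sided volume comparison for Skwarczy\'nski balls is in the same spirit as the paper's use of Lemma 4.2. But there is a genuine gap at the point where you invoke the triangle inequality. The Skwarczy\'nski distance satisfies $0\le d\le 1$, and the proposition is needed precisely in the regime where $r=p(s)=\sqrt{1-s}$ is close to $1$ (this is what makes the error term in Proposition 4.4 small). In that regime $2r>1$ and $3r>1$, so the bounds $\operatorname{diam}_d F_j\le 2r$ and $F_j\subseteq B_d(z,3r)$ carry no information: the first is weaker than the trivial bound $\operatorname{diam}_d\le 1$, while $B_d(z,3r)=U^{\alpha}$ and $\lambda(U^{\alpha})=\int_{U^{\alpha}}K_{U^{\alpha}}(w;\bar w)\,d\sigma(w)=\infty$, so the upper side of your volume-counting inequality is infinite. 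The substantive content of part (2) --- which the paper proves and which the later sections use --- is that $\operatorname{diam}_d F_j<C(r)<1$, i.e., that the cells stay uniformly away from the maximal distance; this cannot follow from the metric triangle inequality alone.

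What is missing is a quasi-triangle inequality adapted to the multiplicative structure of $s(z,w)=|\langle k_z,k_w\rangle|$: if $d(z,w)=d(w,t)=p(s)$ then $d(z,t)\le p(s^{c})<1$ for a universal constant $c$, equivalently $-\log s(z,t)\lesssim -\log s$. This is the technical heart of the paper's proof: it requires the kernel estimates (\ref{KD})--(\ref{KD1}), a Bergman-metric triangle inequality in the $w_1$-slice, and a separate, rather delicate argument controlling the argument of $1-t_1\bar w_1$ (the claim (\ref{4.13}) and the trigonometric estimates that follow it). Once that inequality is available, both the diameter bound and the containment of each $G_j$ in a ball $D(x_j,p(s^{b}))$ of radius still strictly less than $1$ follow, and your volume-counting argument (with Lemma 4.2 supplying the two-sided comparison $c(s)\lambda(D(0,p(C_1s^{a})))\lesssim\lambda(D(z,p(s)))\lesssim c(s)\lambda(D(0,p(C_2s^{b})))$) then closes part (1). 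Without it, your proof establishes the proposition only for $r<1/3$, which is the opposite of the regime the statement and its applications require.
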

\paragraph{\bf Remark} In general, if a metric space satisfies Proposition 4.1 and the constant $N(r)$ above does not depend on $r$, i.e. $N(r)\lesssim 1$, then  the metric space is said to have finite asymptotic dimension in the sense of Gromov \cite{Gromov}. The finiteness of the asymptotic dimension is satisfied for nice domains equipped with the Bergman metric such as the unit ball \cite{Sua} and polydisc \cite{Mitkovski} in $\mathbb C^n$.  We are able to show that  Proposition 4.1 holds for the domain $U^{\alpha}$ equipped with Skwarczynski distance. However the finiteness of the asymptotic dimension for the metric space $(U^{\alpha},d)$ is unclear to us.
\vskip 5pt
Let $s(z,w)$ denote $|\langle k_z,k_w\rangle|$ and set $p(x):=\sqrt{1-x}$. Then the Skwarczy\'nski distance $d(z,w)=p(s(z,w))$. Let $D(z,r)$ denote the ball centered at point $z$ of radius $r$ under this metric. If the distance between $z$ and $w$ is fixed, then we simply use $s$ to denote $s(z,w)$ and use $D(z,p(s))$ to denote the ball $D(z,r)$ with radius $r=p(s)$. 

The next lemma below shows that for any point $z\in U^{\alpha}$ the image of the ball $D(z,p(s))$ under the mapping $\phi_{f_{\alpha}(z)}\circ\varphi_{z_2}$ has a size that is comparable to the size of the ball $D(0,p(cs^\beta))$ for some constants $c, \beta>0$. 
	\begin{lem}
Let $z$ and $w$ be two points in $U^{\alpha}$. If $-\log s(z,w)$ is sufficiently large, then $-\log s(z,w)\simeq -\log s(\phi_{f_{\alpha}(z)}\circ\varphi_{z_2}(w),0)$. Moreover, for sufficiently large $-\log s$, there exists constants $C_1, C_2, a, b$ (independent of $s$) and a constant $c(s)$ such that the weighted measure of the ball $D(z,p(s))$ satisfies the following two properties:  \begin{enumerate}\item $D(0,p(C_1s^a)\subseteq \phi_{f_{\alpha}(z)}\circ\varphi_{z_2} (D(z,p(s)))\subseteq D(0,p(C_2s^b))$,  and  \item $c(s)\lambda(D(0,p(C_1s^a)))\lesssim \lambda(D(z,p(s)))\lesssim c(s)\lambda(D(0,p(C_2s^b)))$.\end{enumerate}
	\end{lem}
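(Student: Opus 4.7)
The plan is to factor $\psi := \phi_{f_\alpha(z)}\circ\varphi_{z_2}$ and exploit that $\varphi_{z_2}$, being a biholomorphic automorphism of $U^\alpha$, preserves the Skwarczy\'nski distance. This reduces part~(1) to comparing $s((f_\alpha(z),0), u)$ with $s(0,v)$, where $u := \varphi_{z_2}(w)$ and $v := \phi_{f_\alpha(z)}(u)$, both of which can be evaluated directly from (\ref{2.1}). Writing the diagonal kernel in the form
\[
K(x;\bar x)\simeq (1-|x_2|^2)^{1-\alpha}\left(1-\frac{|x_2|^2}{1-|x_1|^{2/\alpha}}\right)^{-3}(1-|x_1|^2)^{-3}
\]
derived in the proof of Lemma~2.1, and combining it with the identity $|v_2|^2/(1-|v_1|^{2/\alpha}) = |u_2|^2/(1-|u_1|^{2/\alpha})$ from (\ref{1.14}), the M\"obius relation $1-|v_1|^2 = (1-|f_\alpha(z)|^2)(1-|u_1|^2)/|1-\overline{f_\alpha(z)}u_1|^2$, and $1-r^{2/\alpha}\simeq 1-r^2$, almost every factor should cancel cleanly to give
\[
\frac{s(0,v)}{s((f_\alpha(z),0), u)}\simeq\left(\frac{1-|v_2|^2}{1-|u_2|^2}\right)^{(\alpha-1)/2}.
\]

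Part~(1) then amounts to bounding the log of this surplus by a constant multiple of $-\log s(z,w)$ in the regime where the latter is large. The crude bound $|v_2|^2\leq |u_2|^2/(1-|u_1|^{2/\alpha})$ (since $1-|v_1|^{2/\alpha}\leq 1$), combined with the defining inequality $|u_1|^{2/\alpha}+|u_2|^2<1$ of $U^\alpha$, lets one express this ratio in terms of the ``small quantities'' already visible in the formula for $s((f_\alpha(z),0), u)$, yielding a two-sided polynomial control and hence the desired logarithmic comparability.

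The ball inclusions in part~(2) are then a direct translation of the log comparison: $s(z,w)\geq s$ gives $s(0,\psi(w))\gtrsim s^{b}$, so $\psi(w)\in D(0, p(C_2 s^b))$ for a suitable $b$, and running the same argument through $\psi^{-1}=\varphi_{z_2}\circ\phi_{f_\alpha(z)}$ (both involutions) delivers the reverse inclusion. For the measure part I would pull $\lambda$ through $\psi$: since $\varphi_{z_2}$ is $\lambda$-invariant and $d\sigma(\phi_{f_\alpha(z)}(w))=|J\phi_{f_\alpha(z)}(w)|^2\,d\sigma(w)$, Lemma~2.1 yields
\[
d\lambda(\psi(w))\simeq\left(\frac{1-|w_2|^2}{1-|\psi^{(2)}(w)|^2}\right)^{\alpha-1} d\lambda(w).
\]
On the ball $D(z, p(s))$ this density stays within bounded factors of a single scalar $c(s)$; combining this with the inclusions of part~(2) then gives the two-sided bounds on $\lambda(D(z,p(s)))$.

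The main obstacle is the logarithmic comparison underpinning part~(1). Because $\phi_{f_\alpha(z)}$ fails to be biholomorphic when $\alpha\neq 1$, the quantity $s$ is not preserved by $\psi$; the residual factor $((1-|v_2|^2)/(1-|u_2|^2))^{(\alpha-1)/2}$ is precisely what forces the weakened exponents $a,b$ in part~(2) in place of the ``$s$ in, $s$ out'' equality one has when $\alpha=1$, and it is its careful estimation that occupies the heart of the argument.
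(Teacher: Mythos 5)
Your proposal is correct and follows essentially the same route as the paper: both arguments rest on the explicit kernel estimate, the invariance identity (\ref{1.14}), the M\"obius relation for $1-|\phi^{(1)}|^2$, and the comparability $1-r^2\simeq 1-r^{2/\alpha}$, with the measure comparison obtained from the Jacobian formula of Lemma 2.1 and the observation that the density $\bigl((1-|w_2|^2)/(1-|\phi^{(2)}(w)|^2)\bigr)^{\alpha-1}$ is pinched between constants depending only on $s$ on the relevant ball. The only difference is organizational — you factor through $\varphi_{z_2}$ via distance invariance and isolate the residual factor $\bigl((1-|v_2|^2)/(1-|u_2|^2)\bigr)^{(\alpha-1)/2}$ explicitly, while the paper manipulates the full logarithm in one chain — but the estimates carried out are the same.
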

	\begin{proof} 
		Let $z$ and $w$ be two points in $U^{\alpha}$ such that $-\log s(z,w)\gg |\log \alpha|$. Then $-\log s(z,w)$ is large when it compares to $\left|\log \frac{1-r^2}{1-r^{\frac{2}{\alpha}}}\right|$ for all $r\in [0,1)$. By the definition of $s(\cdot,\cdot)$,
		\begin{align}
		-\log s(z,w)\simeq&-\log\frac{|K_{U^{\alpha}}(z,\bar w)|}{K_{U^\alpha}(z;\bar z)^{\frac{1}{2}}K_{U^\alpha}(w;\bar w)^{\frac{1}{2}}}&\nonumber
		\\\simeq&\log\frac{\left|1-\frac{\bar z_1 w_1}{(1-\bar z_2 w_2)^{\alpha}}\right|^3|1-z_2\bar w_2|^{2+\alpha}}{\left(1-\frac{|z_1|^2}{(1-|z_2|^2)^{\alpha}}\right)^{\frac{3}{2}}(1-|z_2|^2)^{1+\frac{\alpha}{2}}\left(1-\frac{|w_1|^2}{(1-|w_2|^2)^{\alpha}}\right)^{\frac{3}{2}}(1-|w_2|^2)^{1+\frac{\alpha}{2}}}
			\nonumber\\
			\simeq&\log\frac{\left|1-\frac{\bar z_1 w_1}{(1-\bar z_2 w_2)^{\alpha}}\right|^2}{\left({1-\frac{|z_1|^2}{(1-|z_2|^2)^\alpha}}\right)}-\log\left({1-\frac{|w_1|^2}{(1-|w_2|^2)^\alpha}}\right)-\log\frac{(1-|z_2|^2)(1-|w_2|^2)}{|1-\bar z_2 w_2|^2}.
		\end{align}
		By the definition of $\phi_{f_\alpha}$ and $\varphi_{z_2}$, we have
			\begin{align}
			&\log\frac{\left|1-\frac{\bar z_1 w_1}{(1-\bar z_2 w_2)^{\alpha}}\right|^2}{\left({1-\frac{|z_1|^2}{(1-|z_2|^2)^\alpha}}\right)}-\log\left({1-\frac{|w_1|^2}{(1-|w_2|^2)^\alpha}}\right)-\log\frac{(1-|z_2|^2)(1-|w_2|^2)}{|1-\bar z_2 w_2|^2}\nonumber\\
			=&\log\frac{1-|w_1\frac{(1-|z_2|^2)^{\alpha/2}}{(1-\bar z_2 w_2)^{\alpha}}|^2}{1-{|\phi^{(1)}(\varphi_{z_2}(w))|^{2}}}-\log\left({1-\frac{|w_1\frac{(1-|z_2|^2)^{\alpha/2}}{(1-\bar z_2 w_2)^{\alpha}}|^2}{(1-|\frac{z_2-w_2}{1-\bar z_2 w_2}|^2)^\alpha}}\right)-\log\frac{(1-|z_2|^2)(1-|w_2|^2)}{|1-\bar z_2 w_2|^2}.
			\end{align}
		Since $d(z,w)\gg \left|\log \frac{1-r^2}{1-r^{\frac{2}{\alpha}}}\right|$
		for all $r\in [0,1)$, we have $d(z,w)\simeq d(z,w) \pm \left|\log \frac{1-r^2}{1-r^{\frac{2}{\alpha}}}\right|$. Therefore 
			\begin{align}
&\log\frac{1-|w_1\frac{(1-|z_2|^2)^{\alpha/2}}{(1-\bar z_2 w_2)^{\alpha}}|^2}{1-{|\phi^{(1)}(\varphi_{z_2}(w))|^{2}}}-\log\left({1-\frac{|w_1\frac{(1-|z_2|^2)^{\alpha/2}}{(1-\bar z_2 w_2)^{\alpha}}|^2}{(1-|\frac{z_2-w_2}{1-\bar z_2 w_2}|^2)^\alpha}}\right)-\log\frac{(1-|z_2|^2)(1-|w_2|^2)}{|1-\bar z_2 w_2|^2}\nonumber\\
\simeq&-\log\left({1-{|\phi^{(1)}(\varphi_{z_2}(w))|^{2}}}\right)-\log\left({1-\frac{|\frac{z_2-w_2}{1-\bar z_2 w_2}|^2}{1-{|w_1\frac{(1-|z_2|^2)^{\alpha/2}}{(1-\bar z_2 w_2)^{\alpha}}|^{\frac{2}{\alpha}}}}}\right)
\nonumber\\
\simeq&-\log\left({1-{|\phi^{(1)}(\varphi_{z_2}(w))|^{2}}}\right)-\log\left({1-\frac{|\phi^{(2)}(\varphi_{z_2}(w))|^2}{1-{|\phi^{(1)}(\varphi_{z_2}(w))|^{\frac{2}{\alpha}}}}}\right)
	\nonumber\\
	\simeq&-\log\left({1-{|\phi^{(1)}(\varphi_{z_2}(w))|^{\frac{2}{\alpha}}}{-|\phi^{(2)}(\varphi_{z_2}(w))|^2}}\right)
	\nonumber\\
	\simeq&-\log\left({1-\frac{|\phi^{(1)}(\varphi_{z_2}(w))|^{\frac{2}{\alpha}}}{1-|\phi^{(2)}(\varphi_{z_2}(w))|^2}}\right)-\log\left({1-|\phi^{(2)}(\varphi_{z_2}(w))|^2}\right)	
	\nonumber\\
	\simeq&-\log\left({1-\frac{|\phi^{(1)}(\varphi_{z_2}(w))|^2}{(1-|\phi^{(2)}(\varphi_{z_2}(w))|^2)^{{\alpha}}}}\right)-\log\left({1-|\phi^{(2)}(\varphi_{z_2}(w))|^2}\right)		
	\nonumber\\
		\simeq&-\frac{3}{2}\log\left({1-\frac{|\phi^{(1)}(\varphi_{z_2}(w))|^2}{(1-|\phi^{(2)}(\varphi_{z_2}(w))|^2)^{{\alpha}}}}\right)-(1+\frac{\alpha}{2})\log\left({1-|\phi^{(2)}(\varphi_{z_2}(w))|^2}\right)		
	\nonumber\\\simeq&	-\log s(\phi_{f_{\alpha}(z)}\circ\varphi_{z_2}(w),0).
			\end{align}
			Hence  the inequality $-\log s(z,w)\simeq -\log s(\phi_{f_{\alpha}(z)}\circ\varphi_{z_2}(w),0)$ is proved. As a consequence, there exists constants $C_1$, $C_2$, $a$, and $b$ such that $$C_1s^a\geq s(\phi_{f_{\alpha}(z)}\circ\varphi_{z_2}(w),0)\geq C_2s^b.$$ This inequality then implies Property (1) of Lemma 4.2 $$D(0,p(C_1s^a)\subseteq \phi_{f_{\alpha}(z)}\circ\varphi_{z_2} (D(z,p(s)))\subseteq D(0,p(C_2s^b)).$$
			We turn to prove Property (1) of Lemma 4.2 by first showing that $$ \lambda(D(z,p(s)))\lesssim c(s)\lambda(D(0,p(C_2s^b))).$$ For sufficiently small $s$, it is shown from above that there exists a constant $C_2>0$ and $b>0$ such that the set $\phi_{f_{\alpha}(z)}\circ\varphi_{z_2}(D(z,p(s)))\subseteq D(0,p(C_2s^b))$. Therefore 
			\begin{align}
			\lambda(D(z,p(s)))&=\int_{D(z,p(s))}K_{U^{\alpha}}(w;\bar w)d\sigma(w)\nonumber
			\\&=\int_{\phi_{f_{\alpha}(z)}(\varphi_{z_2}(D(z,p(s))))}K_{U^{\alpha}}(\varphi_{z_2}(\phi_{f_{\alpha}(z)}(t));\overline {\varphi_{z_2}(\phi_{f_{\alpha}(z)}(t))})|J(\varphi_{z_2}\circ\phi_{f_{\alpha}(z)}(t)|^2d\sigma(t)\nonumber
			\\&\leq\int_{D(0,p(C_2s^b))}K_{U^{\alpha}}(\varphi_{z_2}(\phi_{f_{\alpha}(z)}(t));\overline {\varphi_{z_2}(\phi_{f_{\alpha}(z)}(t))})|J(\varphi_{z_2}\circ\phi_{f_{\alpha}(z)}(t)|^2d\sigma(t)\nonumber
			\\&\leq\int_{D(0,p(C_2s^b))}K_{U^{\alpha}}(\phi_{f_{\alpha}(z)}(t);\overline {\phi_{f_{\alpha}(z)}(t)})|J(\phi_{f_{\alpha}(z)}(t)|^2d\sigma(t)\nonumber
				\\&\lesssim\int_{D(0,p(C_2s^b))}K_{U^{\alpha}}(t;\bar t)\frac{(1-|\phi^{(2)}(t)|^2)^{1-\alpha}}{(1-|t|^2)^{1-\alpha}}d\sigma(t).
			\end{align}
			We claim that $\frac{(1-|\phi^{(2)}(w)|^2)^{1-\alpha}}{(1-|w|^2)^{1-\alpha}}\simeq c(s)$ for some constant that depends only on $s$. Then we have $\lambda(D(z,p(s)))\lesssim c(s)\lambda(D(0,p(C_2s^b)))$, which completes the proof.
			
			For $t\in D(0,p(C_2s^b))$, we have
			\begin{align}
		-\log s(0,t)\simeq-\log\left(1-\frac{|t_1|^2}{(1-|t_2|^2)^{\alpha-1}}-|t_2|^2\right)\lesssim -\log C_2s^b\simeq-\log s.
			\end{align}
			Hence there exists a constant $c_1(s)>0$, such that $\left(1-\frac{|t_1|^2}{(1-|t_2|^2)^{\alpha-1}}-|t_2|^2\right)>c_1(s)$. Since 
		$1-|t_2|^2>\left(1-\frac{|t_1|^2}{(1-|t_2|^2)^{\alpha-1}}-|t_2|^2\right)$, we also have $1\geq 1-|t_2|^2>c_1(s)$. By (\ref{1.14}),
	\begin{align}
	1\geq1-|\phi^{(2)}(t)|^2&=1-\frac{|t_2|^2(1-|\phi^{(1)}(t)|^{2/\alpha})}{(1-|t_1|^{2/\alpha})}\nonumber
	\\&\geq1-\frac{|t_2|^2}{1-|t_1|^{2/\alpha}}\nonumber
	\\	&\geq 1-|t_2|^2-|t_1|^{2/\alpha}\nonumber
		\\	&=\left(1-\frac{|t_1|^{2/\alpha}}{1-|t_2|^2}\right)(1-|t_2|^2)\nonumber
			\\	&\gtrsim \left(1-\frac{|t_1|^{2}}{(1-|t_2|^2)^{\alpha}}\right)(1-|t_2|^2)\nonumber
				\\	&= \left(1-\frac{|t_1|^2}{(1-|t_2|^2)^{\alpha-1}}-|t_2|^2\right)>c_1(s).
	\end{align}
	Therefore, 	we conclude that \begin{align}\label{4.61}\frac{(1-|\phi^{(2)}(t)|^2)^{1-\alpha}}{(1-|t|^2)^{1-\alpha}}\leq c(s)\end{align} for some constant that depends only on $s$. 
	
	Starting with a constant $C_1$ such that $D(0,p(C_1s^a))\subseteq\phi_{f_{\alpha}(z)}\circ\varphi_{z_2}(D(z,p(s)))$ and then a similar argument  yields the inequality $\lambda (D(z,p(s)))\gtrsim c(s)\lambda(D(0,p(C_1s^a)))$.
	\end{proof}

The following well-known decomposition of a separable metric space in our proof of Proposition 4.1. The proof of this lemma can be found in \cite{Rochberg}. 
\begin{lem}
	Let $(X,d)$ be a separable metric space and $r>0$. For $x\in U^{\alpha}$, let $D(x,d)$ denote the open ball with center $x$ and radius $r>0$ in the metric space $(X,d)$. There is a countable set of points $\{x_j\}$ and a corresponding set of Borel subsets $\{Q_j\}$ of $X$ that satisfy
	\begin{enumerate}
		\item $X=\bigcup_j Q_j;$
		\item $Q_j\bigcap Q_{k}=\emptyset$ for $j\neq k$;
		\item $D(x_j,r)\subset Q_j\subset \{x\in X:d(D(x_j,r),x)\leq r\}$.
	\end{enumerate}
\end{lem}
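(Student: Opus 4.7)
The plan is to produce $\{x_j\}$ as a maximal $2r$-separated net in $X$ and then partition $X$ by a Voronoi-type assignment with deterministic tie-breaking. First, I would use Zorn's lemma to choose a maximal subset $\{x_j\}\subset X$ with $d(x_j,x_k)\geq 2r$ for all $j\neq k$. Separability forces this set to be countable: if $\{y_k\}$ is a countable dense subset, every ball $D(y_k,r/4)$ contains at most one $x_j$ (two such points would be within distance $r/2<2r$ of each other), so $|\{x_j\}|\leq\aleph_0$. Maximality also guarantees that for every $x\in X$ there exists some $j$ with $d(x,x_j)<2r$, since otherwise $\{x\}\cup\{x_j\}$ would be a strictly larger $2r$-separated set.

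Next, enumerate $\{x_j\}_{j\geq 1}$ and define
$$Q_j := D(x_j,r)\;\cup\;\Big\{x\in X : x\notin \bigcup_k D(x_k,r),\ d(x,x_j)<2r,\ d(x,x_i)\geq 2r\text{ for every } i<j\Big\}.$$
Each $Q_j$ is Borel, because $D(x_j,r)$ is open, $X\setminus\bigcup_k D(x_k,r)$ is closed, and the remaining conditions are countable Boolean combinations of open and closed sets. Property (3) is then built in: the first inclusion is immediate, and for $x\in Q_j\setminus D(x_j,r)$ one has $d(x,x_j)<2r$, hence $d(x,D(x_j,r))\leq d(x,x_j)-r<r$. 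Property (2) follows because the balls $D(x_j,r)$ are pairwise disjoint by $2r$-separation, the second clause of the definition explicitly excludes $\bigcup_k D(x_k,r)$, and the smallest-index rule assigns a unique $j$ on the complement; a short case analysis rules out the three possible overlap configurations. Property (1) follows since each $x$ either lies in a unique $D(x_j,r)$ or falls into the second tier for the smallest index $j$ with $d(x,x_j)<2r$, which exists by maximality.

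The only real obstacle is selecting the separation constant correctly: taking the net to be $2r$-separated rather than $r$-separated is what forces the open balls $D(x_j,r)$ to be pairwise disjoint, so the first-tier assignment is unambiguous and the estimate $d(x,D(x_j,r))\leq d(x,x_j)-r<r$ in (3) holds. Once that choice is fixed, the remaining verifications are routine bookkeeping around the Voronoi-type construction.
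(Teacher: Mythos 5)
Your overall architecture (a maximal net, then a Voronoi-type disjointification with a first tier of balls and a second tier resolved by a smallest-index rule) is reasonable, and your verifications of countability, Borel measurability, disjointness, and covering are fine. The gap is in the second inclusion of (3). The inequality $d(x,D(x_j,r))\le d(x,x_j)-r$ is simply not valid in a general metric space: the open ball $D(x_j,r)$ need not contain any point at distance close to $r$ from $x_j$, so the infimum $d(D(x_j,r),x)$ can be as large as $d(x,x_j)$ itself, which your construction only bounds by $2r$. Concretely, take $X=\{0,\,1.9\}\subset\mathbb R$ with the Euclidean metric and $r=1$: then $\{0\}$ is a maximal $2r$-separated set, your $Q_1$ equals all of $X$, yet $\{x\in X: d(D(0,1),x)\le 1\}=\{0\}$, so (3) fails. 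This is not an artifact of a disconnected toy example: in the paper's application the Skwarczy\'nski distance is bounded above by $1$ while $r=p(s)$ is taken close to $1$, so $2r>1$ and \emph{any} maximal $2r$-separated set is a single point, collapsing your decomposition to $Q_1=X$. The inequality you invoke would be legitimate in a geodesic or length space, but the lemma is stated for arbitrary separable metric spaces, and the metric it is applied to is exactly of the problematic kind.

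The repair is to impose maximality on the balls rather than on the centers: by Zorn's lemma choose $\{x_j\}$ maximal with respect to the property that the open balls $D(x_j,r)$ are pairwise disjoint (countability follows as in your argument, since each ball is nonempty and open and so contains a point of a countable dense set). Maximality now says that for every $x\in X$ the ball $D(x,r)$ meets some $D(x_j,r)$; picking $y$ in the intersection gives $d(D(x_j,r),x)\le d(y,x)<r$, which is precisely the containment required in (3). Define the second tier by the closed (hence Borel) condition $d(D(x_j,r),x)\le r$, together with your smallest-index rule and the exclusion of $\bigcup_k D(x_k,r)$; the rest of your bookkeeping then goes through essentially verbatim. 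For what it is worth, the paper does not prove this lemma at all but cites it to \cite{Rochberg}, so your attempt is being measured against the statement rather than against an argument in the text.
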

\begin{proof}[Proof of Proposition 4.1]
We know that $(U^{\alpha},d)$ is a separable metric space.  We choose $s$ as in Lemma 4.2 and then $r=p(s)$ will be close to 1. By Lemma 4.3, there is a collection of points $\{x_j\}\in U^{\alpha}$ and Borel sets $F_j:=Q_j\subset U^{\alpha}$ so that $\mathcal F_r:=\{F_j\}$ is a disjoint covering of $U^{\alpha}$. We first prove that $\operatorname{diam}_dF_j<C(r)<1$. Since $F_j \subseteq\{x\in U^{\alpha}:d(D(x_j,r),x)<r\}$, it suffices to show that
\begin{align}\label{4.7}\operatorname{diam}_d\{x\in U^{\alpha}:d(D(x_j,r),x)<r\}<C(r)<1.\end{align}
Let $z=(z_1,z_2), w=(w_1,w_2)$, and $t=(t_1,t_2)$ be three points in $U^{\alpha}$ such that the distances $d(z,w)=d(w,t)=p(s)=r$. We claim that $d(z,t)<C(s)<1$. Then (\ref{4.7}) holds. Note that the Skwarczy\'nski distance is invariant under the holomorphic automorphism and $\varphi_{z_2}$ sends $(z_1,z_2)$ to $(f_{\alpha}(z),0)$. It is enough to show that $d(z,t)<C(r)$ when $z=(z_1,0)$. For $z=(z_1,0)$,
\begin{align}
-\log s(z,t)&\simeq\log\frac{|1-z_1\bar t_1|^3}{(1-|z_1|^2)^{\frac{3}{2}}\left(1-\frac{|t_1|^2}{(1-|t_2|^2)^\alpha}\right)^{\frac{3}{2}}(1-|t_2|^2)^{\frac{2+\alpha}{2}}}\nonumber
\\&\simeq\log\frac{|1-z_1\bar t_1|^3}{(1-|z_1|^2)^{\frac{3}{2}}\left(1-\frac{|t_1|^2}{(1-|t_2|^2)^\alpha}\right)^{\frac{3}{2}}(1-|t_2|^2)^{\frac{3}{2}}}=I_1.
\end{align} 
Similarly, we have
 \begin{align}
 -\log s(z,w)&\simeq\log\frac{|1-z_1\bar w_1|^3}{(1-|z_1|^2)^{\frac{3}{2}}\left(1-\frac{|w_1|^2}{(1-|w_2|^2)^\alpha}\right)^{\frac{3}{2}}(1-|w_2|^2)^{\frac{3}{2}}}=I_2.
 \end{align} 
 and 
  \begin{align}
 -\log s(t,w)&\simeq\log\frac{|1-t_2\bar w_2|^{2+\alpha}|1-\frac{t_1\bar w_1}{(1-t_2\bar w_2)^{\alpha}}|^3}{(1-|t_2|^2)^{1+\frac{\alpha}{2}}\left(1-\frac{|t_1|^2}{(1-|t_2|^2)^{\alpha}}\right)^{\frac{3}{2}}\left(1-\frac{|w_1|^2}{(1-|w_2|^2)^{\alpha}}\right)^{\frac{3}{2}}(1-|w_2|^2)^{1+\frac{\alpha}{2}}}\nonumber\\
 &\simeq\log\frac{|1-t_2\bar w_2|^{3}|1-\frac{t_1\bar w_1}{(1-t_2\bar w_2)^{\alpha}}|^3}{(1-|t_2|^2)^{\frac{3}{2}}\left(1-\frac{|t_1|^2}{(1-|t_2|^2)^{\alpha}}\right)^{\frac{3}{2}}\left(1-\frac{|w_1|^2}{(1-|w_2|^2)^{\alpha}}\right)^{\frac{3}{2}}(1-|w_2|^2)^{\frac{3}{2}}}
 =I_3.
 \end{align} 
 Since $d(z,w)=d(t,w)=p(s)$, we have $I_2\simeq I_3\simeq-\log s$. Consider $I_1-I_2-I_3$:
 \begin{align}\label{4.11}
 I_1-I_2-I_3&=\log\frac{|1-z_1\bar t_1|^3\left(1-\frac{|w_1|^2}{(1-|w_2|^2)^{\alpha}}\right)^{3}(1-|w_2|^2)^{3}}{|1-z_1\bar w_1|^3|1-t_2\bar w_2|^{3}|1-\frac{t_1\bar w_1}{(1-t_2\bar w_2)^{\alpha}}|^3}.
 \end{align}
 By the triangle inequality of the Bergman distance on the unit disk, we have
 $$\frac{|1-z_1\bar t_1|^3}{|1-z_1\bar w_1|^3}\lesssim \frac{|1-w_1\bar t_1|^3}{(1-|w_1|^2)^3}.$$
Applying this inequality to (\ref{4.11}) yields
  \begin{align}\label{4.131}
 I_1-I_2-I_3&\lesssim\log\frac{|1-w_1\bar t_1|^3\left(1-\frac{|w_1|^2}{(1-|w_2|^2)^{\alpha}}\right)^{3}(1-|w_2|^2)^{3}}{(1-|w_1|^2)^3|1-t_2\bar w_2|^{3}|1-\frac{t_1\bar w_1}{(1-t_2\bar w_2)^{\alpha}}|^3}.
 \end{align}
 Further applying the estimate that $\frac{1-t^2}{1-t^{\frac{2}{\alpha}}}\simeq 1$ to the right hand side of (\ref{4.131}) gives
 \begin{align*}
&\log\frac{|1-w_1\bar t_1|^3\left(1-\frac{|w_1|^2}{(1-|w_2|^2)^{\alpha}}\right)^{3}(1-|w_2|^2)^{3}}{(1-|w_1|^2)^3|1-t_2\bar w_2|^{3}|1-\frac{t_1\bar w_1}{(1-t_2\bar w_2)^{\alpha}}|^3}\\
\lesssim&\log\frac{|1-w_1\bar t_1|^3\left(1-\frac{|w_1|^{\frac{2}{\alpha}}}{1-|w_2|^2}\right)^{3}(1-|w_2|^2)^{3}}{(1-|w_1|^{\frac{2}{\alpha}})^3|1-t_2\bar w_2|^{3}\left(1-\left|\frac{t_1\bar w_1}{(1-t_2\bar w_2)^{\alpha}}\right|\right)^3}\\
\lesssim&\log\frac{|1-w_1\bar t_1|^3\left(1-\frac{|w_2|^{{2}}}{1-|w_1|^{\frac{2}{\alpha}}}\right)^{3}}{\left({|1-t_2\bar w_2|}-{|t_1\bar w_1|^{\frac{1}{\alpha}}}\right)^3}\\
\lesssim&\log\frac{|1-w_1\bar t_1|^3\left(1-\frac{|w_2|^{{2}}}{1-|w_1|^{\frac{2}{\alpha}}}\right)^{3}}{(1-|t_1\bar w_1|)^3\left(1-\frac{|t_2\bar w_2|}{1-|t_1\bar w_1|^{\frac{1}{\alpha}}}\right)^3}\\
\lesssim&\log\frac{|1-w_1\bar t_1|^3}{(1-|t_1\bar w_1|)^3}.
 \end{align*}

When $|t_2\bar w_2|>s^\frac{1}{3}$, we have $1-|t_1\bar w_1|^\frac{1}{\alpha}>|t_2\bar w_2|>s^\frac{1}{3}$. Then 
$$\log\frac{|1-w_1\bar t_1|^3}{(1-|t_1\bar w_1|)^3}\lesssim\log \frac{|1-w_1\bar t_1|^3}{(1-|t_1\bar w_1|^\frac{1}{\alpha})^3}<\log \frac{|1-w_1\bar t_1|^3}{s}\lesssim-\log s.$$
Hence $d(z,t)=\sqrt{1-s(z,t)}<\sqrt{1-s^b}$ for some constant $b$. 

When $|t_2\bar w_2|<s^\frac{1}{3}$, we write $1-t_2\bar w_2=r_1e^{i\theta_1}$ where $r_1=|1-t_2\bar w_2|$. Using trigonometric geometry, we have $$|\theta_1|\simeq|\sin\theta_1|\leq|t_2\bar w_2|<s^\frac{1}{3}.$$ We also write the function $$\frac{t_1\bar w_1}{(1-t_2\bar w_2)^{\alpha}}=r_2e^{i\theta_2}.$$
Thus $t_1\bar w_1=r_1^{\alpha}r_2e^{i(\alpha\theta_1+\theta_2)}$. We claim there is a constant $c>0$ such that 
\begin{align}\label{4.13}
|1-t_1\bar w_1|<s^{-c}(1-|t_1\bar w_1|).
\end{align} Assuming the claim, we have 
$$\log\frac{|1-w_1\bar t_1|^3}{(1-|t_1\bar w_1|)^3}\lesssim\log \frac{s^{-c}(1-|t_1\bar w_1|)}{(1-|t_1\bar w_1|^\frac{1}{\alpha})}\lesssim\log \frac{1}{s^c}\lesssim-c\log s,$$
which implies that $d(z,t)<C(s)<1$, and the proof for Part (2) of Proposition 4.1 is complete. We show the claim by contradiction. Suppose (\ref{4.13}) is not true. Then for any large constant $c$, there exists points $t=(t_1,t_2)$ and $w=(w_1,w_2)$ in $U^{\alpha}$ such that 
$$|1-t_1\bar w_1|\geq s^{-c}(1-|t_1\bar w_1|).$$
Thus we have 
\begin{align}
s^{-2c}\leq&\frac{|1-t_1\bar w_1|^2}{(1-|t_1\bar w_1|)^2}\nonumber\\=&\frac{1+r_1^2r_2^2-2r_1r_2\cos(\alpha\theta_1+\theta_2)}{1+r_1^2r_2^2-2r_1r_2} 
\nonumber\\=&1+\frac{2r_1r_2(1-\cos(\alpha\theta_1+\theta_2))}{1+r_1^2r_2^2-2r_1r_2} 
\nonumber\\=&1+\frac{4r_1r_2\sin^2(\frac{\alpha\theta_1+\theta_2}{2})}{1+r_1^2r_2^2-2r_1r_2} 
\nonumber\\=&1+\frac{4r_1r_2(\sin(\frac{\alpha\theta_1}{2})\cos(\frac{\theta_2}{2})+\cos(\frac{\alpha\theta_1}{2})\sin(\frac{\theta_2}{2}))^2}{1+r_1^2r_2^2-2r_1r_2} 
\nonumber\\\lesssim&\frac{r_2\sin^2(\frac{\alpha\theta_1}{2})}{(1-|t_1\bar w_1|)^2}+\frac{r_2|\sin(\frac{\alpha\theta_1}{2})||\sin(\frac{\theta_2}{2})|}{(1-|t_1\bar w_1|)^2}+\frac{r_2\sin^2(\frac{\theta_2}{2})}{(1-|t_1\bar w_1|)^2}.
\end{align}
Since $|\theta_1|\simeq|\sin\theta_1|\leq|t_2\bar w_2|<s^\frac{1}{3}$ for sufficient small $s$, we have $\sin^2(\frac{\alpha\theta_1}{2})\lesssim \frac{\alpha^2|t_2\bar w_2|^2}{4}$ and hence 
\begin{align}
&\frac{r_2\sin^2(\frac{\alpha\theta_1}{2})}{(1-|t_1\bar w_1|)^2}+\frac{r_2|\sin(\frac{\alpha\theta_1}{2})||\sin(\frac{\theta_2}{2})|}{(1-|t_1\bar w_1|)^2}+\frac{r_2\sin^2(\frac{\theta_2}{2})}{(1-|t_1\bar w_1|)^2}\nonumber\\\lesssim&\frac{r_2|t_2\bar w_2|^2}{(1-|t_1\bar w_1|)^2}+\frac{r_2|t_2\bar w_2||\sin(\frac{\theta_2}{2})|}{(1-|t_1\bar w_1|)^2}+\frac{r_2\sin^2(\frac{\theta_2}{2})}{(1-|t_1\bar w_1|)^2}
\nonumber\\\lesssim&\frac{r_2|\sin(\frac{\theta_2}{2})|}{(1-|t_1\bar w_1|)}+\frac{r_2\sin^2(\frac{\theta_2}{2})}{(1-|t_1\bar w_1|)^2}.
\end{align}
Note that $s(w,t)=|\langle k_w,k_t\rangle|=s$. There exists a constant $b>0$ such that
\begin{align}\frac{|1-\frac{t_1\bar w_1}{(1-t_2\bar w_2)^{\alpha}}|^2}{\left(1-\left|\frac{t_1\bar w_1}{(1-t_2\bar w_2)^{\alpha}}\right|\right)^2}=&\frac{(1-r_2)^2+4r_2\sin^2(\frac{\theta_2}{2})}{(1-r_2)^2}\nonumber\\\leq&\frac{(1-r_2)^2+4r_2\sin^2(\frac{\theta_2}{2})}{\left(1-\frac{|t_1\bar w_1|}{(1-|t_2\bar w_2|)^{\alpha}}\right)^2}\nonumber\\\leq& \frac{\left|1-\frac{t_1\bar w_1}{(1-t_2\bar w_2)^{\alpha}}\right|^2}{\left(1-\frac{|t_1\bar w_1|}{(1-|t_2\bar w_2|)^{\alpha}}\right)^2}\lesssim s^{-b}.\end{align}
Therefore
\begin{align}
&s^{-2c}\lesssim \frac{r_2|\sin(\frac{\theta_2}{2})|}{(1-|t_1\bar w_1|)}+\frac{r_2\sin^2(\frac{\theta_2}{2})}{(1-|t_1\bar w_1|)^2}\lesssim\frac{r_2\sin^2(\frac{\theta_2}{2})}{\left(1-\frac{|t_1\bar w_1|}{(1-|t_2\bar w_2|)^{\alpha}}\right)^2}\lesssim s^{-b}.
\end{align}
Since $s$ is chosen to be sufficiently small,  the inequality above implies that $2c<b$ which contradicts the assumption that $c$ can be arbitrarily large. Hence (\ref{4.7}) is proved.
 
  It remains to show that every point of $U^\alpha$ belongs to at most $N(r)$ of sets $$G_j:=\{z\in U^{\alpha}:d(z,F_j)\leq r\}.$$ Since $F_j \subseteq\{x\in U^{\alpha}:d(D(x_j,r),x)<r\}$, we clearly have $G_j\subset D(x_j,p(s^b))$ for some constant $b>1$.  It suffices to prove that the corresponding balls $D(x_j,p(s^b))$ have a finite intersection property. Our argument about $\operatorname{diam}_d F_j$ also implies that the set $$\{x\in U^{\alpha}:d(D(x_j,p(s^b)),x)<p(s^b)\}$$ is contained in $D(x_j,p(s^{b^2}))$. Suppose that $z\in \cap_{l=1}^ND(x_l,p(s^b))$. Then we have
$$\bigcup_{l=1}^ND(x_l,p(s^b))\subset D(x_k,p(s^{b^5})),$$
where we have fixed one of the $N$ balls, the one centered at $x_k$. Since $D(x_l,p(s))$ are disjoint,
$$\sum_{l=1}^N\lambda(D(x_l,p(s)))\leq\lambda(D(x_k,p(s^{b^{5}}))).$$
Therefore Lemma 4.2 implies that
$$c(s)\lambda (D(0,p(s)))N\simeq\sum_{l=1}^N\lambda(D(x_l,p(s)))\leq\lambda(D(x_k,p(s^{b^{5}})))\simeq c({s^b}^5)\lambda(D(0,p(s^{b^{5}}))),$$
and
$$N\lesssim \frac{c(s^{b^{5}})\lambda(D(0,p(s^{b^{5}})))}{c(s)\lambda (D(0,p(s)))}=:C(s).$$
Here $C(s)$ is a constant depending only on $s$ and the proof is complete. 
\end{proof} 

With the decomposition from Proposition 4.1, we obtain  the following localization property which is a crucial step towards the compactness results.
\begin{prop}Let $T:A^2(U^{\alpha})\rightarrow A^2(U^{\alpha})$ be a linear operator. If 
\begin{align}
&\sup\limits_{z\in U^{\alpha}}\|V_{f_{\alpha}(z)}U_{z}Tk_{z}(w)\|_{L^p(U^{\alpha})}<\infty, \text{ and }\\
&\sup\limits_{z\in U^{\alpha}}\|V_{f_{\alpha}(z)}U_{z}T^*k_{z}(w)\|_{L^p(U^{\alpha})}<\infty.
\end{align}
for some $p>4$, then for every $\epsilon>0$ there exists $r>0$ such that for the covering $\mathcal F_r=\{F_j\}$ from Proposition 4.1
\begin{align}
\|T-\sum_{j}M_{1_{F_j}}TPM_{1_{G_j}}\|<\epsilon.
\end{align}
\end{prop}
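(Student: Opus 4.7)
The plan is to reduce $D := T - \sum_j M_{1_{F_j}} T P M_{1_{G_j}}$ to a positive integral operator whose kernel is supported away from the diagonal, and then bound its $L^2\to L^2$ norm by replaying the Schur-type argument from the proof of Theorem 3.1 with the integration region restricted to a neighborhood of the boundary that shrinks as $r\to 1$.

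\emph{Step 1 (reduction to an off-diagonal Schur operator).} Since $\{F_j\}$ is a disjoint cover, $\sum_j M_{1_{F_j}}=I$; and since $T^*K_z\in A^2(U^\alpha)$, the reproducing property gives $TPM_{1_{G_j}}f(z)=\int_{G_j}TK_v(z)f(v)\,d\sigma(v)$ for $f\in A^2(U^\alpha)$. Writing $Tf(z)=\int_{U^\alpha}TK_v(z)f(v)\,d\sigma(v)$ and subtracting,
$$Df(z)=\sum_{j}1_{F_j}(z)\int_{U^\alpha\setminus G_j}TK_v(z)f(v)\,d\sigma(v).$$
For a.e.\ $z$ a unique $j=j(z)$ contributes; because $v\notin G_{j(z)}$ forces $d(v,F_{j(z)})>r$ and $z\in F_{j(z)}$, one gets $d(z,v)>r$. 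Hence $|Df(z)|\leq\tilde S|f|(z)$ with $\tilde S$ the positive integral operator of kernel $R(z,v)=1_{\{d(z,v)>r\}}\,|TK_v(z)|$, so $\|D\|\leq\|\tilde S\|_{L^2\to L^2}$.

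\emph{Step 2 (Schur plus the Theorem 3.1 substitutions).} To estimate $\|\tilde S\|_{L^2\to L^2}$, I apply Lemma 2.4 with test function $h(w)=\|K_w\|^{\epsilon/q}$, choosing $\epsilon$ strictly inside $(2-2/q,\,2/(3q))$ (the range from the proof of Theorem 3.1); the dual Schur inequality is handled by the symmetric argument based on the hypothesis on $Tk_z$. The matter reduces to
$$\sup_{z\in U^\alpha}\|K_z\|^{-\epsilon}\int_{\{v:d(z,v)>r\}}|TK_v(z)|\,\|K_v\|^{\epsilon}\,d\sigma(v)\,\longrightarrow\,0\quad\text{as }r\to 1.$$
Recycling the chain of substitutions $v\mapsto\varphi_{z_2}(t)\mapsto\varphi_{z_2}(\phi_{f_\alpha(z)}(w))$ from the proof of Theorem 3.1 and invoking Lemma 4.2 to see that $\{v:d(z,v)>r=p(s)\}$ pulls back into $\{w:d(0,w)>r'\}$ with $r'=p(C_1s^a)\to 1$, H\"older's inequality (with the $L^p$ factor unrestricted and the $L^q$ factor restricted to the shrinking region) produces the bound
$$\leq\|V_{f_\alpha(z)}U_zT^{*}k_z\|_{L^{p}(U^\alpha)}\cdot\mathcal J(z,r')^{1/q},$$
where $\mathcal J(z,r')$ is the Forelli--Rudin integrand displayed in (1.53) integrated over the transformed region.

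\emph{Step 3 (uniform decay on the boundary layer --- the main obstacle).} The first factor is uniformly bounded in $z$ by hypothesis, so the crux is
$$\sup_{z\in U^\alpha}\mathcal J(z,r')\,\longrightarrow\,0\quad\text{as }r'\to 1,$$
which I expect to be the main technical difficulty. My strategy is to exploit the slack in Lemma 2.3 afforded by the interior choice of $\epsilon$: for a suitably small $\eta>0$ one factors
$$\mathcal K^{q}(z,w)=c(w)^{\eta}\cdot\mathcal K_1^{q}(z,w),$$
where $c(w):=(1-|w_2|^{2})^{(2+\alpha)/2}\bigl(1-|w_1|^{2}/(1-|w_2|^{2})^{\alpha}\bigr)^{3/2}\simeq\|K_w\|^{-1}$ and where the parameters of $\mathcal K_1^{q}$ are shifted so that Lemma 2.3 still applies, giving $\int\mathcal K_1^{q}\,d\sigma\lesssim 1$ uniformly in $z$. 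The computation in Section 4 yields $|\langle k_0,k_w\rangle|\simeq c(w)$, which forces $c(w)\lesssim 1-r'^{2}$ on $\{w:d(0,w)>r'\}$, so
$$\mathcal J(z,r')\lesssim(1-r'^{2})^{\eta}$$
uniformly in $z$. Inserting this into Step 2 produces the desired $C(r)\to 0$ and completes the plan.
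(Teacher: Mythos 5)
Your Steps 1 and 2 coincide with the paper's argument: the error operator is dominated pointwise by the positive integral operator with kernel $R(z,w)=\sum_j 1_{F_j}(z)1_{G_j^c}(w)\left|\langle T^*K_z,K_w\rangle\right|$, supported where $d(z,w)>r$, and one runs the Schur test of Lemma 2.4 with test function $\|K_\cdot\|^{\epsilon}$ and the chain of substitutions from Theorem 3.1. The divergence is in how H\"older's inequality is used to harvest smallness from the off-diagonal restriction, and there your Step 3 has a genuine gap. In the application of Lemma 2.3 inside the proof of Theorem 3.1 the parameter $\delta_1$ equals $0$ \emph{exactly}: condition (\ref{1.58}) is an identity (both sides equal $3$) for every admissible $q$ and $\epsilon$, so the ``slack afforded by the interior choice of $\epsilon$'' lives only in (\ref{1.54})--(\ref{1.57}), i.e.\ in $\epsilon_1<1$ and $\delta_2>0$; there is no slack at all in the $w_1$-direction. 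Writing $\mathcal K_1^{q}=c(w)^{-\eta}\mathcal K^{q}$ introduces the factor $\bigl(1-|w_1|^2/(1-|w_2|^2)^{\alpha}\bigr)^{-3\eta/2}$ into the numerator while the denominator is unchanged, which forces $\delta_1'=-3\eta/2<0$; by part (3) of Lemma 2.2 the inner $t_1$-integral then produces an extra unbounded factor of the form $\bigl(1-|z_1|^2(1-|w_2|^2)^{\alpha}/(1-|z_2|^2)^{\alpha}\bigr)^{-3\eta/2}$, and $\sup_{z}\int \mathcal K_1^{q}\,d\sigma$ is no longer finite, so the claimed bound $\mathcal J(z,r')\lesssim(1-r'^2)^{\eta}$ does not follow. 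The only exponent you can genuinely spare is a power of $(1-|w_2|^2)$ (coming from $\delta_2>0$), but that factor is not small on all of $\{w:d(0,w)>r'\}$ --- take $w=(w_1,0)$ with $|w_1|\to1$ --- so it cannot substitute for $c(w)^{\eta}$.

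The repair is to apply H\"older the other way around, which is exactly what the paper does: keep the Forelli--Rudin ($L^q$) factor over all of $U^{\alpha}$, where Lemma 2.3 bounds it uniformly precisely as in Theorem 3.1, and place the restriction to the pulled-back region $D(0,p(C_1s^a))^c$ on the $L^p$ factor, so that the Schur constant is controlled by $\sup_z\bigl(\int_{D(0,p(C_1s^a))^c}|V_{f_\alpha(z)}U_zT^*k_z|^p\,d\sigma\bigr)^{1/p}$. This tends to $0$ as $s\to0$; to get the decay uniformly in $z$ one can use that the hypothesis holds for some $p>4$, apply H\"older once more with an exponent $p'\in(4,p)$, and use that the Lebesgue measure of $D(0,p(C_1s^a))^c$ shrinks to $0$. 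With that replacement your outline becomes the paper's proof; as written, Step 3 does not close.
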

\begin{proof}
 We set $R(z,w)=\sum_j1_{F_j}(z)1_{G^c_j}(w)|\langle T^*K_z,K_w\rangle |$, $h(z)=\|K_z\|^t$. Let $\{F_j\}$ and $\{G_j\}$ be as in the proof of Lemma 4.2 and Proposition 4.1 provided the radius $r=p(s)$. Since $\{F_j\}$ form a covering for $U^{\alpha}$ there exists a unique $j$ such that $z\in F_j$.
	Then we have
	\begin{align}
	\int_{U^{\alpha}}R(z,w)\|K_w\|^{t}d\sigma(w)&=\int_{U^{\alpha}}1_{F_j}(z)1_{G^c_j}(w)|\langle T^*K_z,K_w\rangle |\|K_w\|^{t}d\sigma(w)\nonumber\\
	&=\int_{G^c_j}1_{F_j}(z)|\langle T^*K_z,K_w\rangle |\|K_w\|^{t}d\sigma(w)\nonumber\\
		&\leq\int_{D(z,p(s))^c}|\langle T^*K_z,K_w\rangle |\|K_w\|^{t}d\sigma(w).
	\end{align}
	By a same argument as in the proof of Theorem 3.1 and the following fact from Lemma 4.2 $$\phi_{f_{\alpha}(z)}\circ\varphi_{z_2}(D(z,p(s)))\supseteq D(0,p(C_1s^a)),$$ we have 
	\begin{align}
&\int_{D(z,p(s))^c}|\langle T^*K_z,K_w\rangle |\|K_w\|^{t}d\sigma(w)\nonumber\\
\lesssim &\int_{D(0,p(C_1s^a))^c}\frac{\left|  V_{f_\alpha(z)}U_{z}T^*k_z(w)\right|(1-|w_2|^2)^{\frac{\alpha-1}{2}}\left\| K_{\varphi_{z_2}(\phi_{f_{\alpha}(w)}(w))} \right\|^{t-1}\|K_z\|^{1-t}}{(1-|\phi^{(2)}(w)|^2)^{\frac{\alpha-1}{2}}\|K_{w}\|} d\lambda(w),
	\end{align}
	and therefore for $p>4$
		\begin{align}
	&	\int_{U^{\alpha}}R(z,w)\|K_w\|^{t}d\sigma(w)\nonumber\\\lesssim&\left\| K_z\right\|^{t}\left( \int_{U^{\alpha}} \frac{(1-|w_2|^2)^{\frac{q(\alpha-1)}{2}}\left\| K_{\varphi_{z_2}(\phi_{f_{\alpha}(z)}(w))} \right\|^{q(t-1)}\|K_z\|^{q(1-t)}\|K_w\|^q}{(1-|\phi^{(2)}(w)|^2)^{\frac{q(\alpha-1)}{2}}} d\sigma(w)\right)^{\frac{1}{q}}\\
		&\times\left(\int_{D(0,p(C_1s^a))^c} |V_{f_{\alpha}(z)}U_{z}T^*k_z(w)|^pd\sigma(w)\right)^\frac{1}{p}\nonumber\\
		\lesssim&\left\| K_z\right\|^{t}	\left(\int_{D(0,p(C_1s^a))^c} |V_{f_{\alpha}(z)}U_{z}T^*k_z(w)|^pd\sigma(w)\right)^\frac{1}{p}.
		\end{align}
	Since  $\sup\limits_{z\in U^{\alpha}}\|V_{f_{\alpha}(z)}U_{z}Tk_{z}(w)\|_{L^p(U^{\alpha})}<\infty$, the integral $\left(\int_{D(0,p(C_1s^a))^c} |V_{f_{\alpha}(z)}U_{z}T^*k_z(w)|^pd\sigma(w)\right)^\frac{1}{p}$ approaches 0 when $s\rightarrow 0$.
		
		Next, we check the second condition. Fix $w\in U^{\alpha}$. Let $J$ be a subset of all indices $j$ such that $w\notin G_j$. Then $\cup_{j\in J}F_j\subseteq D(w,r)^c$ and hence
		\begin{align}
		\int_{U^{\alpha}}R(z,w)\|K_z\|^t d\sigma(z)&=\int_{\cup_{j\in J}F_j}|\langle T K_w,K_z\rangle |\|K_z\|^t d\sigma(z)\nonumber\\
		&\leq \int_{D(w,r)^c}|\langle T K_w,K_z\rangle |\|K_z\|^t d\sigma(z).
		\end{align}
		Using the same estimates as above, we obtain that 
		\begin{align}
		\int_{U^{\alpha}}R(z,w)\|K_z\|^t d\sigma(z)\lesssim C(r)\|K_z\|^t,
		\end{align}
		where $C(r)\rightarrow 0$ as $r\rightarrow1$. This proves the proposition.
\end{proof}
Proposition 4.4 together with the following lemma gives an approximation of the bounded operator $T$ using a series of compact operators. See \cite{Englis} for the proof of the lemma.

\begin{lem}
Let $G$ be a precompact Borel set $G$ in $U^{\alpha}$.  Then $T_{1_{G}}$ is compact on $A^2(U^{\alpha})$.
\end{lem}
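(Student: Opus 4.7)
The plan is to prove a stronger statement: that $T_{1_G}$ is Hilbert--Schmidt, which immediately implies compactness. The intuition is that since $G$ has compact closure in $U^\alpha$, the on-diagonal Bergman kernel $K_{U^\alpha}(w;\bar w)$ is uniformly bounded on $G$, so the weighted measure $\lambda(G)=\int_G K_{U^\alpha}(w;\bar w)\,d\sigma(w)$ is finite, and this will control the Hilbert--Schmidt norm.

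First I fix an orthonormal basis $\{e_k\}$ of $A^2(U^\alpha)$. Writing $T_{1_G}=PM_{1_G}$ and using that $P$ is an orthogonal projection of norm at most one,
\[
\|T_{1_G}\|_{HS}^2 \;=\; \sum_k \|P(1_G e_k)\|^2 \;\le\; \sum_k \int_G |e_k(w)|^2\,d\sigma(w).
\]
Interchanging sum and integral by monotone convergence and invoking the classical diagonal identity $\sum_k |e_k(w)|^2 = K_{U^\alpha}(w;\bar w)$ yields
\[
\|T_{1_G}\|_{HS}^2 \;\le\; \int_G K_{U^\alpha}(w;\bar w)\,d\sigma(w) \;=\; \lambda(G).
\]
It remains to check that $\lambda(G)<\infty$. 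Since $\overline{G}$ is a compact subset of the open set $U^\alpha$, it stays at positive distance from $\mathbf{b}U^\alpha$; the estimate (\ref{KD}) then bounds $K_{U^\alpha}(w;\bar w)$ uniformly on $\overline{G}$, and because $G$ has finite Lebesgue measure the integral is finite. Hence $T_{1_G}$ is Hilbert--Schmidt, and in particular compact.

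There is essentially no serious obstacle here, but it is worth recording an alternative route that avoids the Hilbert--Schmidt bookkeeping entirely. Any bounded sequence in $A^2(U^\alpha)$ is a normal family on $U^\alpha$ by Montel's theorem, so it has a subsequence converging uniformly on $\overline{G}$; since $G$ has finite measure this upgrades to $L^2(G)$-convergence. Thus the restriction map $A^2(U^\alpha)\to L^2(G)$ is compact, and post-composing with $P:L^2(U^\alpha)\to A^2(U^\alpha)$ gives compactness of $T_{1_G}=PM_{1_G}$. Both approaches pivot on the same geometric input: precompactness of $G$ places it safely away from the boundary, where the Bergman kernel would otherwise blow up.
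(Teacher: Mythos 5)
Your proposal is correct. Note that the paper does not actually prove this lemma: it simply cites Engli\v{s} for the result, so any self-contained argument is already ``different'' from what the paper offers. Your Hilbert--Schmidt computation is sound: the inequality $\|T_{1_G}\|_{HS}^2\le\sum_k\int_G|e_k|^2\,d\sigma$ uses only $\|P\|\le 1$, the interchange is justified by monotone convergence, the diagonal identity $\sum_k|e_k(w)|^2=K_{U^\alpha}(w;\bar w)$ is the classical basis representation of the Bergman kernel, and the finiteness of $\lambda(G)$ follows because $\overline G$ is a compact subset of the open domain on which $K_{U^\alpha}(w;\bar w)$ is continuous (the estimate (\ref{KD}) is not even needed; continuity of the kernel on the diagonal suffices). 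The Hilbert--Schmidt route gives slightly more than is asked (membership in the Schatten $2$-class, with the quantitative bound $\|T_{1_G}\|_{HS}^2\le\lambda(G)$, which is in the spirit of how the weighted measure $\lambda$ is used elsewhere in the paper), while your alternative via Montel's theorem and compactness of the restriction map $A^2(U^\alpha)\to L^2(G)$ is more elementary and generalizes immediately to $T_\mu$ for any finite positive measure supported in a compact subset. Either argument is acceptable; both correctly isolate the key point that precompactness keeps $G$ away from the boundary where the kernel degenerates.
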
 
	\section{The compactness of operators}
Now we are ready to state and prove the main theorem for the compactness.
\begin{thm}
	Let $T:A^2(U^{\alpha})\rightarrow A^2(U^{\alpha})$ be a linear operator. If	\begin{align}\label{5.1}
	&\sup\limits_{z\in U^{\alpha}}\|V_{f_{\alpha}(z)}U_{z}Tk_{z}(w)\|_{L^p(U^{\alpha})}<\infty;\\\label{5.2}
	&\sup\limits_{z\in U^{\alpha}}\|V_{f_{\alpha}(z)}U_{z}T^*k_{z}(w)\|_{L^p(U^{\alpha})}<\infty,
	\end{align}
	for some $p>4$, then $\lim_{d(z,0)\rightarrow1}\|Tk_z\|=0$ if and only if $T$ is compact.
\end{thm}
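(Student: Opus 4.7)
The forward direction is the standard Hilbert space fact. For any polynomial $f$, $\langle f,k_z\rangle = f(z)/\|K_z\|$, and $\|K_z\|\to\infty$ as $z$ approaches $\partial U^\alpha$; since polynomials are dense in $A^2(U^\alpha)$, this shows $k_z\to 0$ weakly as $d(z,0)\to 1$. A compact operator carries weak null sequences to norm null sequences, so $\|Tk_z\|\to 0$.

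For the reverse direction, the plan is to approximate $T$ in operator norm by compact operators. Fix $\epsilon>0$. First apply Proposition 4.4, whose hypotheses are exactly (\ref{5.1}) and (\ref{5.2}), to find $r<1$ and an associated covering $\mathcal F_r=\{F_j\}, \{G_j\}$ such that $\|T-\sum_j M_{1_{F_j}} T T_{1_{G_j}}\|<\epsilon$. Each $G_j$ is contained in a Skwarczy\'nski ball of radius strictly less than $1$ and is therefore relatively compact in $U^\alpha$, so Lemma 4.5 makes every Toeplitz operator $T_{1_{G_j}}$ compact. Next, use the hypothesis $\|Tk_z\|\to 0$ to select a compact set $K\subset U^\alpha$ (say a Skwarczy\'nski ball around $0$) with $\sup_{w\notin K}\|Tk_w\|<\delta$, where the threshold $\delta$ is chosen at the end. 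Split the indices as $J_1=\{j:G_j\cap K\neq\emptyset\}$ and $J_2=\{j:G_j\subset K^c\}$. The local finiteness of $\{G_j\}$ (Proposition 4.1, combined with the uniform lower bound on $\lambda(F_j)$ from Lemma 4.2) forces $J_1$ to be finite; hence $S_1:=\sum_{j\in J_1} M_{1_{F_j}} T T_{1_{G_j}}$ is a finite sum of compact operators, and is itself compact.

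The main obstacle is the tail estimate for $S_2:=\sum_{j\in J_2} M_{1_{F_j}} T T_{1_{G_j}}$. The key ingredient is the \emph{coherent state representation}
\begin{align*}
TT_{1_{G_j}}f = T\Bigl(\int_{G_j} f(w)K_w\,d\sigma(w)\Bigr) = \int_{G_j} f(w)\|K_w\|\,Tk_w\,d\sigma(w),
\end{align*}
valid as a Bochner integral in $A^2(U^\alpha)$. Taking norms, using $\|Tk_w\|<\delta$ on $G_j$ for $j\in J_2$, and applying Cauchy-Schwarz in $L^2(G_j,d\sigma)$ gives
\begin{align*}
\|TT_{1_{G_j}}f\|\le \delta\,\lambda(G_j)^{1/2}\|1_{G_j}f\|_{L^2}.
\end{align*}
By Lemma 4.2 the weighted measure $\lambda(G_j)$ is bounded uniformly in $j$ by a constant $C_r$ depending only on $r$, so the disjointness of $\{F_j\}$ and the finite multiplicity $N(r)$ of $\{G_j\}$ yield
\begin{align*}
\|S_2 f\|^2 = \sum_{j\in J_2}\|M_{1_{F_j}}TT_{1_{G_j}}f\|^2 \le \delta^2 C_r \sum_{j\in J_2}\|1_{G_j}f\|^2 \le \delta^2 C_r N(r)\|f\|^2.
\end{align*}
Choosing $\delta<\epsilon/\sqrt{C_r N(r)}$ makes $\|S_2\|<\epsilon$, so $T$ lies within $2\epsilon$ of the compact operator $S_1$; as $\epsilon$ is arbitrary, $T$ is compact. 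The delicate point is the order of quantifiers: one must fix $r$ first (which determines the constants $C_r$ and $N(r)$ entering the tail estimate) and only then select $K$ and $\delta$ using the boundary vanishing of $\|Tk_z\|$.
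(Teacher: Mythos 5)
Your proof is correct, and its second half takes a genuinely different, more elementary route than the paper's. Both arguments coincide at the start: the forward direction via the weak convergence $k_z\to 0$, and the reverse direction via Proposition 4.4 and Lemma 4.5, reducing everything to a tail estimate for $\sum_j M_{1_{F_j}}TPM_{1_{G_j}}$. The divergence is in how that tail is controlled. The paper extracts near-extremal unit vectors $l_j=PM_{1_{G_j}}f_j/\|PM_{1_{G_j}}f_j\|$, transports them to the origin with $U_{z_j}V_{f_\alpha(z_j)}$, and uses uniform continuity of the transported kernel $\mathcal U_{z_j}(t;w)$ on $\overline{U^{\alpha}}\times\overline{D(0,p(C_2s^b))}$ to approximate each $h_j$ by a finite linear combination of normalized reproducing kernels, so that the triangle inequality finally reduces $\|Tg_j\|$ to $\sup_{\zeta}\|Tk_{\varphi_{y_j}(\phi_{f_\alpha(z_j)}(\zeta))}\|$. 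You instead write $TPM_{1_{G_j}}f=\int_{G_j}f(w)\|K_w\|\,Tk_w\,d\sigma(w)$ as a Bochner integral (legitimate, since $T$ is bounded by Theorem 3.1 and each $G_j$ is precompact) and apply Cauchy--Schwarz, which bounds each summand by $\delta\,\lambda(G_j)^{1/2}\|1_{G_j}f\|$; the only geometric input you then need is the uniform bound $\lambda(G_j)\leq C_r$, which follows from $G_j\subset D(x_j,p(s^b))$ together with part (2) of Lemma 4.2. This buys a much shorter argument that bypasses the translation operators, the functions $h_j$, and the Riemann-sum approximation entirely, while using the same covering machinery. Two points you should spell out: first, the finiteness of $J_1$ requires observing that $\{x:d(x,K)\leq\rho\}$ is relatively compact in $U^{\alpha}$ for $\rho<1$ (because $\|K_z\|\to\infty$ forces $d(z,K)\to 1$ at the boundary), after which the disjointness of the balls $D(x_j,r)\subset F_j$ and their uniform lower measure bound from Lemma 4.2 finish the count --- the paper implicitly relies on the same local finiteness when it asserts $d(z_j,0)\to 1$ as $j\to\infty$; second, your remark on the order of quantifiers (fix $r$, hence $N(r)$ and $C_r$, before choosing $\delta$ and $K$) is exactly the right precaution and parallels the paper's choice of $s$ before letting $j\to\infty$.
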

\begin{proof}
	Suppose $T$ is compact. Then $T$ sends weakly convergent sequences to strongly convergent sequences. To prove that  $\lim_{d(z,0)\rightarrow1}\|Tk_z\|=0$, it suffices to show that the weak limit of $k_z$ is zero as $d(z,0)\rightarrow1$. Since $\{k_w\}$ is dense in $A^2(U^{\alpha})$, it is enough to prove that for each fixed $w\in U^{\alpha}$, $\langle k_z,k_w\rangle\rightarrow 0$ as $d(z,0)\rightarrow 1$. Note that
	\begin{align}
|\langle k_z,k_w\rangle|=\frac{|K_{U^{\alpha}}(z;\bar w)|}{\|K_z\|\|K_w\|}.
	\end{align}
	By Formulas (\ref{KD}) and (\ref{KD1}), both $\|K_w\|$ and $|K_{U^{\alpha}}(z;\bar w)|$ are bounded for fixed $w$. 
	Therefore $\langle k_z,k_w\rangle\rightarrow 0$ as $d(z,0)\rightarrow 1$.
	
We turn to prove the other direction of the statement.  Fix a small $\epsilon>0$. By Proposition 4.4, there exists a large $s$  such that for the covering $\mathcal F_r=\{F_j\}$ associated to $s$
\begin{align}
\|T-\sum_{j}M_{1_{F_j}}TPM_{1_{G_j}}\|<\epsilon.
\end{align}
By Lemma 4.5, the Toeplitz operators $PM_{1_{G_j}}$ are compact. The sum $\sum_{j\leq m}M_{1_{F_j}}TPM_{1_{G_j}}$ is compact for every $m\in\mathbb N$. So, it is enough to show that 
\begin{align}
\limsup\limits_{m\rightarrow\infty}\|\sum_{j>m}M_{1_{F_j}}TPM_{1_{G_j}}\|\lesssim \epsilon.
\end{align}
Let $f\in A^2(U^{\alpha})$ be arbitrary of norm no greater than 1. Then,
\begin{align}
\|T_mf\|^2&=\sum_{j>m}\|M_{1_{F_j}}TPM_{1_{G_j}}f\|^2\nonumber
\\&=\sum_{j>m}\frac{\|M_{1_{F_j}}TPM_{1_{G_j}}f\|^2}{\|M_{1_{G_j}}f\|^2}\|M_{1_{G_j}}f\|^2\leq N(s)\sup\limits_{j>m}\|M_{1_{F_j}}Tl_j\|^2,
\end{align}
where $N(s)$ is the number of overlaps when radius $r=p(s)$ from Proposition 4.1 and
\begin{align}
l_j:=\frac{PM_{1_{G_j}}f}{\|M_{1_{G_j}}f\|}.
\end{align}
Therefore,
\begin{align}
\|T_m\|\leq N(s)\sup_{j>m}\sup_{\|f\|= 1}\left\{\|Tl_j\|:l_j=\frac{PM_{1_{G_j}}f}{\|PM_{1_{G_j}}f\|}\right\},
\end{align}
and hence 
\begin{align}
\limsup\limits_{m\rightarrow\infty}\|T_m\|\leq N(s) \limsup\limits_{j\rightarrow\infty}\sup_{\|f\|= 1}\left\{\|Tl_j\|:g=\frac{PM_{1_{G_j}}f}{\|PM_{1_{G_j}}f\|}\right\}.
\end{align}
Let $\epsilon>0$. There exists a normalized sequence $\{f_j\}$ in $A^2(U^{\alpha})$ such that
\begin{align}
N(s) \limsup\limits_{j\rightarrow\infty}\sup_{\|f\|= 1}\left\{\|Tl_j\|:g=\frac{PM_{1_{G_j}}f}{\|PM_{1_{G_j}}f\|}\right\}-\epsilon\leq N(s)\limsup_{j\rightarrow\infty}\|Tg_j\|,
\end{align}
where 
\begin{align}
g_j:=\frac{PM_{1_{G_j}}f_j}{\|PM_{1_{G_j}}f_j\|}=\frac{\int_{G_j}\langle f_j,k_w\rangle k_w d\lambda(w)}{\left(\int_{G_j}|\langle f_j,k_w\rangle|^2d\lambda(w)\right)^{\frac{1}{2}}}.
\end{align}
For each $j$ pick $z_j=(x_j,y_j)\in G_j=\{z\in U^{\alpha}:d(z,F_j)\leq r\}$. There exists $s>0$ such that $G_j\subset D(z_j,s)$ for all $j$. By a change of variables, we have
\begin{align}
g_j(t)=\int_{\phi_{f_\alpha(z_j)}\circ\varphi_{y_j}(G_j)}a_j(\varphi_{y_j}(\phi_{f_\alpha(z_j)}(w)))k_{\varphi_{y_j}(\phi_{f_\alpha(z_j)}(w))}(t)d\lambda(\varphi_{y_j}(\phi_{f_\alpha(z_j)}(w))),
\end{align}
where $a_j(w)$ is defined to be
\begin{align}
\frac{\langle f_j,k_w\rangle}{\left(\int_{G_j}|\langle f_j,k_w\rangle |^2d\lambda(w)\right)^\frac{1}{2}}.
\end{align}
We claim that $g_j=U_{z_j}V_{f_\alpha(z_j)} h_j$, where
\begin{align}\label{3.23}
h_j(t):=\int_{\phi_{f_\alpha(z_j)}\circ\varphi_{y_j}(G_j)}a_j(\varphi_{y_j}(\phi_{f_\alpha(z_j)}(w)))V_{f_\alpha(z_j)}U_{z_j}k_{\varphi_{y_j}(\phi_{f_\alpha(z_j)}(w))}(t)d\lambda(\varphi_{y_j}(\phi_{f_\alpha(z_j)}(w))).
\end{align}
Notice that both $U_{z_j}$ and $V_{f_\alpha(z_j)}$ are involutions, and $h_j\in L^2(U^{\alpha})$. The claim can be proved by showing that for each $g\in L^2(U^{\alpha})$ we have $\langle g,g_j\rangle=\langle U_{z_j}V_{f_\alpha(z_j)} g,h_j\rangle$. This identity can be obtained by a change of variables and using Fubini's Theorem. Now we consider the integrand in (\ref{3.23}).\begin{align}
&V_{f_\alpha(z_j)}U_{z_j}k_{\varphi_{y_j}(\phi_{f_\alpha(z_j)}(w))}(t)\nonumber\\
=&\frac{K_{U^{\alpha}}(\varphi_{y_j}(\phi_{f_\alpha(z_j)}(t));\overline{\varphi_{y_j}(\phi_{f_\alpha(z_j)}(w))})J\varphi_{y_j}\circ\phi_{f_\alpha(z_j)}(t)}{\|K_{\varphi_{z_2}(\phi_{f_\alpha(z)}(w))}\|}\nonumber\\
=&\frac{K_{U^{\alpha}}(\phi_{f_\alpha(z_j)}(t);\overline{\phi_{f_\alpha(z_j)}(w)})J\phi_{f_\alpha(z_j)}(t)}{\overline {J\varphi_{y_j}(\phi_{f_\alpha(z_j)}(w))}\|K_{\varphi_{z_2}(\phi_{f_\alpha(z)}(w))}\|}
\nonumber\\
=&\frac{K_{U^{\alpha}}(\phi_{f_\alpha(z_j)}(t);\overline{\phi_{f_\alpha(z_j)}(w)})J\phi_{f_\alpha(z_j)}(t)}{\|K_{\phi_{f_\alpha(z)}(w)}\|}\frac{|J\varphi_{y_j}(\phi_{f_\alpha(z_j)}(w))|}{\overline {J\varphi_{y_j}(\phi_{f_\alpha(z_j)}(w))}}\nonumber\\
\simeq&C(w){K_{U^{\alpha}}(\phi_{f_\alpha(z_j)}(t);\overline{\phi_{f_\alpha(z_j)}(w)})J\phi_{f_\alpha(z_j)}(t)\overline{J\phi_{f_\alpha(z_j)}(w)}}\nonumber\\\label{5.16}&\times\frac{|J\phi_{f_\alpha(z_j)}(w)J\varphi_{y_j}(\phi_{f_\alpha(z_j)}(w))|(1-|w_2|^2)^\frac{\alpha-1}{2}}{\overline {J\phi_{f_\alpha(z_j)}(w)J\varphi_{y_j}(\phi_{f_\alpha(z_j)}(w))}\|K_w\|(1-|\phi^{(2)}(w)|^2)^\frac{\alpha-1}{2}},
\end{align}
where $C(w)$ is a bounded continuous function on $U^{\alpha}$.
For the simplicity of notations, we set \begin{align}
&\mathcal U_{z_j}(t;w):=
K_{U^{\alpha}}(\phi_{f_\alpha(z_j)}(t);\overline{\phi_{f_\alpha(z_j)}(w)})J\phi_{f_\alpha(z_j)}(t)\overline{J\phi_{f_\alpha(z_j)}(w)},\\\label{5.15}
&b_j(\varphi_{y_j}(\phi_{f_\alpha(z_j)}(w))):=a_j(\varphi_{y_j}(\phi_{f_\alpha(z_j)}(w)))\frac{C(w)|J\phi_{f_\alpha(z_j)}(w)J\varphi_{y_j}(\phi_{f_\alpha(z_j)}(w))|(1-|w_2|^2)^\frac{\alpha-1}{2}}{\overline {J\phi_{f_\alpha(z_j)}(w)J\varphi_{y_j}(\phi_{f_\alpha(z_j)}(w))}\|K_w\|(1-|\phi^{(2)}(w)|^2)^\frac{\alpha-1}{2}}.
\end{align}
Then (\ref{3.23}) can be written as
\begin{align}
h_j(t)=\int_{\phi_{f_\alpha(z_j)}\circ\varphi_{y_j}(G_j)}b_j(\varphi_{y_j}(\phi_{f_\alpha(z_j)}(w)))\mathcal U_{z_j}(t;w)d\lambda(\varphi_{y_j}(\phi_{f_\alpha(z_j)}(w))).
\end{align}
Recall from the proof of Proposition 4.1 that $G_j\subseteq D(z_j,p(s^c))$ for some constant $c$. Hence Lemma 4.2 implies $\phi_{f_\alpha(z_j)}\circ\varphi_{y_j}(G_j)\subseteq D(0,p(C_2s^b))$ for some constants $C_2$ and $b$.
Lemma 4.2 together with Lemma 2.1 also implies the total variation of each member of the sequence of measures $$\{b_j(\varphi_{y_j}(\phi_{f_\alpha(z_j)}(w)))d\lambda(\varphi_{y_j}(\phi_{f_\alpha(z_j)}(w)))\},$$ as elements in the dual of $C\overline{(D(0,p(C_2s^b)))}$, satisfies \begin{align}\label{5.19}\|b_j(\varphi_{y_j}(\phi_{f_\alpha(z_j)}(w)))d\lambda(\varphi_{y_j}(\phi_{f_\alpha(z_j)}(w)))\|\lesssim c(s)\lambda(D(0,p(C_2s^b))).\end{align}  By its definition, $\mathcal U_{z_j}(t;w)$ can be continuously extended to an open neighborhood of the domain $\overline {U^{\alpha}}\times \overline{D(0,p(C_2s^b))}$. Thus $\mathcal U_{z_j}(t;w)$ is uniformly continuous on $\overline {U^{\alpha}}\times \overline{D(0,p(C_2s^b))}$. For the same $\epsilon$ as above,  there exists finitely many disjoint subsets $H^{(l)}_{z_j}$ in $\overline{D(0,p(C_2s^b))}$ and finitely many points $\{\zeta_l\}$ such that the following statements hold
\begin{enumerate}
	\item $\overline{D(0,p(C_2s^b))}=\cup_l H^{(l)}_{z_j}$.
	\item $\zeta_l\in H^{(l)}_{z_j}$ for each $l$.
	\item $|\mathcal U_{z_j}(t;w)-\mathcal U_{z_j}(t;\zeta_l)|<{\epsilon}{ N(s)^{-1}c(s)^{-1}\lambda(D(0,p(C_2s^b)))^{-1}}$ for any $t\in U^{\alpha}$ and $w\in  H^{(l)}_{z_j}$.
\end{enumerate}
Therefore,
\begin{align}
&\left|h_j(t)-\sum_l\int_{H^{(l)}_{z_j}}\mathcal U_{z_j}(t;\zeta_l)b_j(\varphi_{y_j}(\phi_{f_\alpha(z_j)}(w)))d\lambda(\varphi_{y_j}(\phi_{f_\alpha(z_j)}(w)))\right|\nonumber\\=&\left|\sum_l\int_{H^{(l)}_{z_j}}\left(\mathcal U_{z_j}(t;w)-\mathcal U_{z_j}(t;\zeta_l)\right)b_j(\varphi_{y_j}(\phi_{f_\alpha(z_j)}(w)))d\lambda(\varphi_{y_j}(\phi_{f_\alpha(z_j)}(w)))\right|\leq\frac{\epsilon}{N(s)}.
\end{align}
Since $U_{z_j}V_{f_\alpha(z_j)}$ preserves the Lebesgue $L^2$ norm, we have
\begin{align}
N(s)\|TU_{z_j}V_{f_\alpha(z_j)}h_j\|\lesssim \epsilon+N(s)\|TU_{z_j}V_{f_\alpha(z_j)}\sum_lc_l\mathcal U_{z_j}(t;\zeta_l)\|,
\end{align}
where $c_l=\int_{H^{(l)}_{z_j}}b_j(\varphi_{y_j}(\phi_{f_\alpha(z_j)}(w)))d\lambda(\varphi_{y_j}(\phi_{f_\alpha(z_j)}(w)))$. Note that (\ref{4.61}) and (\ref{5.16}) imply
\begin{align}\label{5.22}
U_{z_j}V_{f_\alpha(z_j)}\mathcal U_{z_j}(t;\zeta_l)\simeq C_1(w)k_{\varphi_{y_j}(\phi_{f_\alpha(z_j)}(\zeta_l)}(t),
\end{align}
where $|C_1(w)|$ is also controlled by $s$.
(\ref{5.19}) and (\ref{5.22}) together with triangle inequality give
\begin{align}
N(s)\|TU_{z_j}V_{f_\alpha(z_j)}h_j\|\lesssim \epsilon+N(s)c(s)\lambda(D(0,p(C_2s^b)))\sup_{\zeta\in D(0,p(C_2s^b))}\|Tk_{\varphi_{y_j}(\phi_{f_\alpha(z_j)}(\zeta))}\|.
\end{align}
Finally we have
\begin{align}
\limsup_{m\rightarrow\infty}\|T_m\|\lesssim& N(s)\limsup_{j\rightarrow\infty}\|Tg_j\|+\epsilon\nonumber\\=& N(s)\lim_{j\rightarrow\infty}\|TU_{z_j}V_{f_\alpha(z_j)}h_j\|+\epsilon\nonumber\\\lesssim&N(s\label{key})c(s)\lambda(D(0,p(C_2s^b)))\limsup_{j\rightarrow\infty}\sup_{\zeta\in D(0,p(C_2s^b))}\|Tk_{\varphi_{y_j}(\phi_{f_\alpha(z_j)}(\zeta))}\|+\epsilon.
\end{align}
Since $d(z_j,0)\rightarrow1$ as $j\rightarrow\infty$ and $d(\cdot,\cdot)$ is invariant under $\phi_{f_\alpha(z)}\circ\varphi_{z_2}$ in the sense of Lemma 4.2, we have $d(\varphi_{y_j}(\phi_{f_\alpha(z_j)}(\zeta)),0)\rightarrow1$ for all $\zeta\in D(0,p(C_2s^b))$.
Then $\lim_{d(z,0)\rightarrow1}\|Tk_z\|=0$ implies that $\limsup_{m\rightarrow\infty}\|T_m\|\lesssim\epsilon,$ which completes the proof.
\end{proof}
By Corollary 3.2, Toeplitz operators with $L^\infty$ symbols satisfy conditions (\ref{5.1}) and (\ref{5.2}) in Theorem 5.1. Hence we also have the following result:
\begin{co}
	Let $T$ is a Toeplitz operator with a $L^\infty$ symbol. Then $T$ is compact if and only if $$\lim_{d(z,0)\rightarrow1}\|Tk_z\|=0.$$
\end{co}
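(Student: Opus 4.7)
The plan is to reduce Corollary~5.2 to Theorem~5.1 by verifying, for an arbitrary Toeplitz operator $T=T_u$ with symbol $u\in L^\infty(U^\alpha)$, both boundedness hypotheses \eqref{5.1} and \eqref{5.2}. Since $T^*=T_{\bar u}$ is again a Toeplitz operator with an $L^\infty$ symbol, the two conditions are symmetric for this class of operators; it therefore suffices to establish \eqref{5.1} for every $u\in L^\infty(U^\alpha)$, with \eqref{5.2} following upon replacing $u$ by $\bar u$.

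To verify \eqref{5.1}, I first use that $\varphi_{z_2}$ is a biholomorphic automorphism of $U^\alpha$, so $U_z$ commutes with the Bergman projection $P$. A short direct calculation then yields the intertwining identity $U_z T_u = T_{u\circ\varphi_{z_2}} U_z$, and hence
\begin{align*}
V_{f_\alpha(z)} U_z T_u k_z(w) \;=\; V_{f_\alpha(z)}\bigl(T_{u\circ\varphi_{z_2}}(U_z k_z)\bigr)(w).
\end{align*}
Expanding the Toeplitz operator on the right as an integral against $K_{U^\alpha}$ and changing variables via the involution $\phi_{f_\alpha(z)}$, the result can be recast as an integral whose integrand factors as $(u\circ\varphi_{z_2}\circ\phi_{f_\alpha(z)})(s)$ times the function $V_{f_\alpha(z)} U_z k_z(s)$ times a Bergman-kernel factor of the form $J\phi_{f_\alpha(z)}(w)\,\overline{J\phi_{f_\alpha(z)}(s)}\,K_{U^\alpha}(\phi_{f_\alpha(z)}(w);\overline{\phi_{f_\alpha(z)}(s)})$.

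From here the $L^p$-bound follows essentially as in the proof of Corollary~3.2: the pointwise estimate $|V_{f_\alpha(z)} U_z k_z|\lesssim 1$ obtained in that proof, combined with the off-diagonal kernel estimate \eqref{KD1}, H\"older's inequality, and the Forelli-Rudin type bound of Lemma~2.3, should give
\begin{align*}
\sup_{z\in U^\alpha}\|V_{f_\alpha(z)} U_z T_u k_z\|_{L^p(U^\alpha)} \;\lesssim\; \|u\|_\infty\,|U^\alpha|^{1/p} \;<\;\infty
\end{align*}
for every $p>4$. Together with the analogous bound with $\bar u$ in place of $u$, this confirms \eqref{5.1} and \eqref{5.2} for $T=T_u$, and Theorem~5.1 then supplies the claimed equivalence.

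The hard part is this $L^p$-estimate. Because $\phi_{f_\alpha(z)}$ is not holomorphic when $\alpha\neq 1$, the biholomorphic transformation law for the Bergman kernel is unavailable for the kernel factor; one must instead rely directly on the off-diagonal pointwise estimate \eqref{KD1} and the weighted Jacobian identity from Lemma~2.1. This is the same technical difficulty that was already handled in Corollary~3.2, so I expect the adaptation to parallel that argument closely.
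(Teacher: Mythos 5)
Your top-level plan --- verify (\ref{5.1}) and (\ref{5.2}) for $T_u$ and $T_u^*=T_{\bar u}$ and invoke Theorem 5.1 --- is exactly the paper's, and the intertwining identity $U_zT_u=T_{u\circ\varphi_{z_2}}U_z$ is correct. The gap is in the step you defer with ``should give.'' After the change of variables by $\phi_{f_\alpha(z)}$ you are left with
\begin{align*}
|V_{f_\alpha(z)}U_zT_uk_z(w)|\lesssim\|u\|_\infty\int_{U^\alpha}\left|K_{U^\alpha}\bigl(\phi_{f_\alpha(z)}(w);\overline{\phi_{f_\alpha(z)}(\tau)}\bigr)\right|\,|J\phi_{f_\alpha(z)}(w)|\,|J\phi_{f_\alpha(z)}(\tau)|\,d\sigma(\tau),
\end{align*}
and controlling this is \emph{not} ``essentially as in the proof of Corollary 3.2.'' That proof never integrates the kernel against itself: it only needs the single pointwise bound $|V_{f_\alpha(z)}U_zk_z|\lesssim1$ followed by H\"older and Lemma 2.3 applied to an explicit weight. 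Your integral requires an \emph{off-diagonal} analogue of Lemma 2.1 for the non-holomorphic map $\phi_{f_\alpha(z)}$, which the paper does not supply; since $\phi^{(2)}$ multiplies $w_2$ by a $w_1$-dependent phase coming from $\sqrt{1-|f_\alpha(z)|^2}/(1-\overline{f_\alpha(z)}w_1)$, the quantity $|1-\phi^{(2)}(w)\overline{\phi^{(2)}(\tau)}|$ is not comparable to $|1-w_2\bar\tau_2|$ in any obvious way. If one instead undoes the change of variables, the integral becomes $|1-\overline{f_\alpha(z)}a_1|^3\int_{U^\alpha}|K_{U^\alpha}(a;\bar b)|\,|1-\overline{f_\alpha(z)}b_1|^{-3}d\sigma(b)$ with $a=\phi_{f_\alpha(z)}(w)$, which has two competing singular factors in $b_1$ and falls outside Lemma 2.3; a crude estimate even suggests possible blow-up when $w_1=0$ and $|f_\alpha(z)|\to1$. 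So the hard part is genuinely hard, and possibly false as a uniform pointwise bound.

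The paper sidesteps this, and so should you: the proof of Corollary 3.2 does not verify (\ref{5.1}) literally. It shows that in the Schur-test inequality the quantity $\|V_{f_\alpha(z)}U_zT_u^*k_z\|_{L^p}$ may be replaced by $\|u(\varphi_{z_2}(\phi_{f_\alpha(z)}(\cdot)))\,V_{f_\alpha(z)}U_zk_z(\cdot)\|_{L^p}\lesssim\|u\|_\infty$, and that substitute is the only way conditions (\ref{5.1})--(\ref{5.2}) are ever used (via the H\"older factorization in Theorem 3.1, in Proposition 4.4, and hence in Theorem 5.1; e.g.\ in Proposition 4.4 the tail term becomes $\|u\|_\infty\,\sigma(D(0,p(C_1s^a))^c)^{1/p}\to0$). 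The correct completion of your argument is therefore to rerun Proposition 4.4 and Theorem 5.1 with this substitute quantity, rather than to prove the literal $L^p$ bound on $V_{f_\alpha(z)}U_zT_uk_z$.
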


\section{Generalizations and remarks}
In this section, we point out possible generalizations of our results along several directions.
\\
\paragraph{1} 
It is natural to further ask if Corollary 5.2 holds when $T$ is a finite sum of finite products of Toeplitz operators with $L^\infty$ symbols. In the classical cases like the Bergman space on the unit ball, the corresponding conditions (\ref{5.1}) and (\ref{5.2}) for such a $T$ can be obtained using the $L^p$ boundedness of the Bergman projection together with the biholomorphic transformation formula for the Bergman kernel function (off the diagonal). See for instances \cite{AxlerZheng,Sua,Mitkovski2013,Mitkovski20142028}. In our case, the Bergman projection on $U^{\alpha}$ is indeed $L^p$ bounded. See for example \cite{Zhenghui2}. On the other hand, the automorphism we use is not holomorphic. Hence it's not clear to us whether (\ref{5.1}) and (\ref{5.2}) is still true when $T$ is a finite sum of finite products of Toeplitz operators with $L^\infty$ symbols. 
\\
\paragraph{2} In the paper, we focus on the domain $\{(z_1,z_2)\in \mathbb C^2: |z_1|^{2/\alpha}+|z_2|^2<1\}$ with $\alpha>0$ for the simplicity of the argument. Explicit formulas like (\ref{2.1}) are available for the Bergman kernel function on a large family of domains, including the generalized Thullen domain $$\{(z,w)\in\mathbb C^n\times\mathbb C^m:\|z\|^{2p}+\|w\|^2<1\},$$ and  the Fock-Bargmann-Hartogs domain $$\{(z,w)\in\mathbb C^n\times \mathbb C^m:\|z\|<e^{-\|w\|^2}\}.$$ See \cite{1978D'A1,1994D'A2,1999BFS,2013Ya,Zhenghui}. It would be interesting to see if the boundedness and compactness results can be generalized to those domains using similar techniques.
\\
\paragraph{3}  Our results focus on $L^2$ operators. $L^p$ analogues of Theorem 5.1 are obtained for the Bargmann-Fock space \cite{BAUER20121323}, Bergman spaces on the disc and unit ball with classical weights \cite{Mitkovski2013}, and the weighted Bergman spaces on the polydisc \cite{Mitkovski}. It is also possible to generalize our results in the $L^p$ setting for $1<p<\infty$. 
\\
\paragraph{4} Another theme in this subject is to use weaker conditions that involves the Berezin transform to determine the compactness of an operator on the Bergman space. For an operator $T\in A^2(\Omega)$, the Berezin transform of $T$ is defined by $B(S)(z):=\langle Tk_z,k_z\rangle$. It is shown in \cite{Sua} that an operator on $A^2(\mathbb B^n)$ is compact if and only if  $T\in \mathcal T_{L^{\infty}}$ and $B(T)(z)$ vanishes as $\|z\|$ goes to $1$. One of the obstacles of relating $\|Tk_z\|$ to $B(T)(z)$ in our setting is that $V_{f_{\alpha}(z)}U_{z}$ is not an operator on $A^2(U^{\alpha})$: to keep the range of the operator be in $A^2(U^{\alpha})$, the domain of $V_{f_{\alpha}(z)}U_{z}$ will not be $A^2(U^{\alpha})$ and will vary for different $z$. It would be interesting to see if $\lim_{d(z,0)\rightarrow \infty}B(T)(z)=0$ also implies $\lim_{d(z,0)\rightarrow \infty}\|Tk_z\|=0$.
\bibliographystyle{alpha}
\bibliography{2}
\end{document}